\theoremstyle{plain}
\newtheorem{theorem}{Theorem}
\newtheorem{lemma}{Lemma}
\newcommand\base[1]{\widehat{#1}}
\newcommand\m{{\bar m}}
\newcommand\tdue{\mathfrak{t}_2}
\newcommand\tuno{\mathfrak{t}_1}
\newcommand\tres{\mathfrak{t}_\mathrm{res}}
\newcommand\tzero{\mathfrak{t}_0}
\newcommand\Tzero{T_0}
\newcommand\Tuno{T_1}
\newcommand\Tdue{T_2}
\renewcommand{\@oddhead}{\it
  Fass\`o, Garc\'{\i}a-Naranjo and Giacobbe: 
  Relative quasi-periodic tori 
\hfill \thepage}
\newcommand{\g}{\mathfrak{g}}
\newcommand{\for}[1]{(\ref{#1})}
\newcommand{\p}{\varphi}
\renewcommand{\P}{\Phi}
\renewcommand{\a}{\alpha}
\renewcommand{\d}{\delta}
\renewcommand{\o}{\omega}
\renewcommand{\L}{\Lambda}
\newcommand{\cF}{\mathcal{R}}                 
\newcommand{\cP}{\mathcal{T}}                 
\newcommand{\cT}{\mathcal{T}}                 
\newcommand{\cX}{\mathcal{X}}
\newcommand{\acapo}{\vspace{2ex}}
\newcommand{\acapon}{\vspace{2ex}\noindent}
\newcommand{\toro}{\mathbb{T}}
\newcommand{\reali}{\mathbb{R}}
\newcommand{\interi}{\mathbb{Z}}
\newcommand{\ug}{\;=\;}
\newcommand{\per}{\times}		
\renewcommand{\bar}{\overline}		
\newcommand{\const}{\mathrm{const}} 
\newcommand\rank{\mathrm{rank\,}}
\renewcommand{\mod}{\mathrm{mod}} 
\newcommand\bList{
\begin{list}{}{\leftmargin2em\labelwidth1.5em\labelsep.5em\itemindent0em
\topsep.3ex\itemsep-.4ex} }
\newcommand\eList{\end{list}}
\title{\bf Quasi-periodicity \\
in relative quasi-periodic tori}
\begin{document}
\author{
Francesco Fass\`o\footnote{\scriptsize Universit\`a di Padova,
Dipartimento di Matematica, Via Trieste 63, 35131 Padova, Italy.
Email: \tt{fasso@math.unipd.it}, \tt{giacobbe@math.unipd.it}
}  , \ 
Luis C. Garc\'{\i}a-Naranjo\footnote{\scriptsize 
Departamento de Matem\'aticas y Mec\'anica, 
IIMAS-UNAM, 
Apdo Postal 20-726,  Mexico City,  01000, Mexico. Email:
\tt{luis@mym.iimas.unam.mx}
}\ \ 
and Andrea Giacobbe$^*$
}
\maketitle

\begin{abstract}
\noindent
At variance from the cases of relative equilibria and relative
periodic orbits of  dynamical systems with symmetry, the dynamics in
relative quasi-periodic tori (namely, subsets of the phase space
that project to an invariant torus of the reduced system on which the
flow is quasi-periodic) is not yet completely understood. Even in the
simplest situation of a free action of a compact and abelian
connected group, the dynamics in a relative quasi-periodic torus is
not necessarily quasi-periodic. It is known that quasi-periodicity of
the unreduced dynamics is related to the reducibility of the
reconstruction equation, and sufficient conditions for it are
virtually known only in a perturbation context. 
We provide a
different, though equivalent, approach to this subject,
based on the hypothesis of the existence of commuting,
group-invariant lifts of a set of generators of the reduced torus.
Under this hypothesis, which is shown to be equivalent to the
reducibility of the reconstruction equation,
we give a complete description of
the structure of the relative quasi-periodic torus, which is a
principal torus bundle whose fibers are tori of a dimension which
exceeds that of the reduced torus by at most the rank of the group.
The construction can always be done in such a way that these tori
have minimal dimension and carry ergodic flow. 
\end{abstract}


\section{Introduction}
\label{s:1}

\subsection{Relative quasi-periodic tori } 
\label{s:1.1}

In a dynamical system with symmetry, a group-invariant subset of the
phase space is called a {\it relative equilibrium} if its quotient
with respect to the symmetry group $G$ is an equilibrium of the
reduced flow, and a {\it relative periodic orbit} if its quotient is
a periodic orbit of the reduced flow. More generally, a {relative
set} is a group-invariant subset whose quotient is invariant under
the reduced dynamics.\footnote{Sometimes a different convention is
used, and the terms relative equilibria and relative periodic orbits
denote the individual orbits in such relative sets, rather than the
relative sets themselves.} 

The dynamics in relative equilibria and periodic orbits is well
understood. It has been proved by Field \cite{field1,field2} and
Krupa \cite{krupa} that, under hypotheses which include the case of a
free action of a compact and connected group, 
the flow in these two types of relative sets is conjugate to
a linear flow on a torus (or, as we shall say, is `quasi-periodic') of
dimension  not greater than the rank of the group (for relative
equilibria) or of the rank of the group plus one (for relative
periodic orbits). 
The dynamics in relative equilibria and periodic orbits in the case
of a non-compact group, that need not be quasi-periodic, is well
understood as well~\cite{ashwin-melbourne}. Details on these results,
and their modifications under more general hypotheses, can be found
in the quoted references and in the recent books \cite{field3,CDS}.
The related problem of reconstructing families of relative periodic
orbits (e.g., when the entire reduced dynamics is periodic) has been
studied in \cite{hermans} in the case of a free action of a compact
group (see also \cite{fasso-giacobbe, CDS}).

The situation is different if the reduced dynamics is
quasi-periodic. 

By a {\it relative quasi-periodic torus} we mean a
relative set whose quotient is an invariant torus of the reduced
system, that has dimension $\ge2$ and on which the reduced flow is conjugate
to a linear flow. Clearly, it may be assumed that the reduced flow
has nonresonant frequencies, so that the reduced invariant torus is
the closure of any reduced trajectory on it (if not, restrict to a
subtorus). It is well
known that, even in the simplest case of a compact and connected group, the
dynamics in a relative quasi-periodic torus need not be
quasi-periodic. For a simple example see \cite{Zung}, where this
failure is interpreted in terms of the 
appearance of `small divisors' in the reconstruction process.

{\it{Per se}}, the specific topic of quasi-periodicity in relative
quasi-periodic tori has so far received little attention. The only
papers we know are \cite{zenkov,Zung}. Reference \cite{Zung} proves
that, under natural hypotheses, relative quasi-periodic tori of
{Hamiltonian} systems have quasi-periodic dynamics. Therefore, this
problem is of interest only in the non-Hamiltonian case.
Reference \cite{zenkov}, even though considers only a special
non-Hamiltonian case, lays
down ideas that apply in general and links the problem 
to the theory of reducibility of quasi-periodic linear equations, 
a subject on which there is a vast literature (see e.g. \cite{puig}
and references therein). Specifically, reference \cite{zenkov} proves that,
under natural hypotheses that we will review in Section
\ref{s:Comparison}, the reducibility of the
reconstruction equation implies the quasi-periodicity of the dynamics
in the relative quasi-periodic torus. 

However, reducibility of a quasi-periodic linear system is a difficult
matter. There exist sufficient conditions of perturbative KAM
type, but they require
hypotheses (strong nonresonance and the presence of parameters that
may be varied, see e.g. \cite{broer1,broer2}) that are hardly
satisfied in the case of a single relative torus. More easily
applicable to this case are results of approximate reducibility,
called effective or almost reducibility (see e.g.
\cite{jorba,chavaudret}), that require weaker conditions
but give only closeness of the  dynamics to quasi-periodicity for
finite though long time scales \cite{zenkov}.

At a deeper level, reference \cite{zenkov} proves that the
reconstruction equation is reducible if and only if the unreduced
vector field has an additional torus symmetry, a fact that will
play a role in our approach as well.

\subsection{Our results} 
\label{s:1.2}
The aim of this article is to introduce a new, though equivalent to
reducibility,
point of view on the subject of reconstruction of
quasi-periodic dynamics. We focus on the simplest case of a free
action of a compact and connected group and develop a reconstruction
technique from reduced quasi-periodic dynamics that generalizes the
known procedure used to reconstruct reduced periodic orbits, that
is described in \cite{field2, krupa,
hermans, field3,CDS}. 

If the reduced flow is
quasi-periodic with nonresonant frequencies, then no reduced orbit
ever returns to the same point and the unreduced (or, as we shall
say, the `reconstructed') orbits never return to the same group
orbit. This makes it impossible to define the basic object used in
the reconstruction theory for periodic orbits---the group element
associated to the first return map and variously called either `phase' or
`monodromy' or `shift'. 

We overcome this difficulty by applying the reconstruction techniques
for periodic orbits not to the flow, but to a homology basis of the
reduced torus. In order to do this, we need the assumption that, if
the reduced torus is $k$-dimensional, a set
of $k-1$ of its generators lifts to a set of group-invariant
vector fields that commute among each other and with the dynamics.

Under this hypothesis, we prove that the {relative quasi-periodic
torus} has a structure analogous to that of relative equilibria and
periodic orbits: it is fibered by invariant tori of a certain
dimension $k+d$, with $d$ between zero and the rank of the group, on
which the flow is linear (Theorem 1). The frequencies of the
reconstructed flow are the $k$ `internal' frequencies of the reduced
flow plus $d$ other `external' frequencies that are identified in the
reconstruction process in terms of the `phases' of the lifts of the
generators and of the Lie algebra of a certain torus in the group,
whose construction depends on both geometric and dynamic properties
of the system.\footnote{The terms `internal' and `external'
frequencies are used in reducibility theory, see e.g. \cite{puig}.}

Furthermore, as in the case of relative periodic orbits
\cite{field1,CDS}, this construction can always be done in a way
which is natural from the dynamical point of view as well, that is,
the dimension $k+d$ of the reconstructed tori is `minimal' and the
flow is dense on them; this may require the consideration
of a covering of the relative torus, and subharmonics of the reduced
frequencies may appear (Theorem 2).

We also prove that our hypothesis is equivalent to the
reducibility of the reconstruction equation (Theorem 3). Thus, our
results on quasi-periodicity can be regarded as a different
characterization of reducibility, with the addition, however, of a 
detailed description of the geometry of the 
relative quasi-periodic torus, which was missing so far. 

\subsection{Comments} 
\label{s:1.3}
Sample systems to which our technique may be applied can easily 
be constructed (see e.g. Section \ref{s:3.2}). It would
be very interesting if our technique turned out to be applicable to
some `natural' problem, and in particular to nonholonomic systems, to which
the well understood machinery of the Hamiltonian mechanics does not
apply. Certainly, proving the existence of lifts with the desired
properties may be difficult in practice but, as we will illustrate on
some examples, it is not more difficult than proving reducibility. 

Both, our hypotheses and the reducibility of the reconstruction
equation, are sufficient conditions for quasi-periodicity of the
dynamics. The reason why they are not also necessary is related to
global aspects. The reconstruction equation, in its usual formulation,
assumes that the relative set has the product structure $G\times
\toro^k$, but it is certainly possible that the dynamics be
quasi-periodic in a relative torus which is a nontrivial bundle with
fiber $G$ and base $\toro^k$. In our characterization, this
triviality assumption is hidden in the fact that we assume the
existence of global lifts of the generators, an assumption that might
be weakened. We defer these generalizations to a future work. 

Generalizations to non-connected groups and non-free actions can be
treated easily, much in the same way as in the case of relative
periodic orbits, and do not introduce important differences, see
Section \ref{s:comments1}. The case of non-compact groups is
significantly different, and would require a distinct analysis.

Finally, we note that a natural extension of the present study concerns
the case in which the reduced dynamics has not just one relative
quasi-periodic torus, but is integrable, in the sense that it has
quasi-periodic dynamics. This is indeed the case considered in
references \cite{zenkov,Zung} when the reduced dynamics is
quasi-periodic and in references \cite{hermans,fasso-giacobbe,CDS}
when the dynamics is periodic. Our technique is applicable to
study such a case but, because of certain specificities of this
problem---which are related to the global structure of the fibration
by invariant tori---we prefer deferring this study to a separate work.

\section{Reconstruction theorems }
\label{section2}

\subsection{Statements}
\label{s:2.1}
Consider a free action $\Psi$ of a compact and connected Lie group $G$ on a
manifold $M$. Denote by $\base{M}$ the quotient manifold $M/G$ and by
$\pi:M\to\base{M}$ the canonical projection. Let $X$ be a
$G$-invariant vector field on $M$.
The reduced vector field is the vector field
$\base{X} : =\pi_*X$ on $\base{M}$.\footnote{The push-forward $\pi_*X$
of $X$ is the vector field on $\base M$ whose value in $\base m \in
\base M$ is 
$\base{X}(\base{m}) = T_m \pi \cdot X(m)$ for any $m\in\pi^{-1}(\base m)$.} 

Assume also that the reduced vector field $\base{X}$ has an invariant
torus on which its flow is quasi-periodic. Specifically, there is an
embedding
$$
   i : \toro^k = (\reali/\interi)^k\to \base{M}
$$
such that $\base{X}$ is tangent to $\base{\cF}:=i(\toro^k)$ 
and, denoting by $\p=(\p_1,\ldots,\p_k)$ the angles in
$\toro^k$,\footnote{The pull-back $i^*(\base X|_{\base{\cF}})$ is the 
unique vector field on $\toro^k$ such that $Ti\cdot i^*(\base
X|_{\base{\cF}})= \base X\circ i$.}
$$
 i^*(\base X|_{\base{\cF}}) = \sum_{j=1}^k\o_j \partial_{\p_j}
$$
with $\o_1,\ldots,\o_k\in\reali$.
We assume that the frequencies
$\o_1,\ldots,\o_k$ are nonresonant, that is, independent over
$\interi$ (if they are not, then the reconstruction applies to each
nonresonant subtorus of~$\base\cF$). Hence, the closure of each
reduced motion in $\base{\cF}$ is the entire $\base{\cF}$. 

From now on we restrict our analysis to the {\it relative
quasi-periodic torus}
$$
  \cF:=\pi^{-1}(\base{\cF}) \,,
$$
which is a compact submanifold of $M$ of dimension $k+\dim G$ and is
$G$-invariant and $X$-invariant. The restriction $\pi|_\cF$
of $\pi$ to $\cF$ makes such a submanifold a $G$-principal bundle over
$\base{\cF}$. From now on we will
write $X$ for $X|_\cF$, $\pi$ for $\pi|_\cF$ etc. (The 
manifold $M$ will not play any role in the sequel; in fact, we might even
have taken $M=\cF$).

Recall that a dynamical symmetry of a vector field $Y$ is any vector
field that commutes with $Y$. Moreover, we say that a vector field $S$ on
$\cF$ is a {\it lift} of a vector field $\base{S}$ on
$\pi(\cF)=\base\cF$ if
$\pi_*S=\base{S}$. As usual, we denote  by $\cX(\cF)$ the set of all
vector fields on $\cF$ and by $\P^Y_t$ the time-$t$ map of
the flow of a vector field $Y$. 

With a 
little abuse we denote by
$\partial_{\p_1},\ldots,\partial_{\p_k}$
the vector fields $i_*(\partial_{\p_1}),\ldots, i_*(\partial_{\p_k})$ in
$\base\cF$, and we call them {\it generators} of $\base\cF$. 
Of course, the generators are not unique: any choice of angles on
$\widehat\cF$, namely of the embedding $i$, gives a set of them.

\begin{theorem}\label{th1}
Under the stated hypotheses, assume that there exist lifts of $k-1$
among a set of generators of $\base\cF$ which
are $G$-invariant, are dynamical symmetries of $X$, and pairwise commute.
Then:
\bList
\item[i.] There exists a fibration of $\cF$ whose fibers are $X$-invariant
manifolds diffeomorphic to $\toro^{k+d}$, for a certain $0\le
d\le\rank G$, on which the flow of $X$ is conjugate to the
linear flow on $\toro^{k+d}$ with frequencies
$(\o_1,\ldots,\o_k,\nu_1,\ldots,\nu_d)$, with
$(\nu_1,\ldots,\nu_d)\in\reali^d$ constant on $\cF$.
\item[ii.] 
The base of this fibration of $\cF$ is diffeomorphic to $G/T$, where
$T\subset G$ is a $d$-dimensional torus. 
\eList
\end{theorem}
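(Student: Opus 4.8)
The plan is to promote the given infinitesimal data to a transitive action on $\cF$ of the direct product $A=\reali^k\per G$ of a vector group with the compact group $G$, and then to read the fibration off the resulting homogeneous-space structure. First I would reorder the angles so that the hypothesis supplies $G$-invariant, pairwise commuting lifts $S_1,\dots,S_{k-1}$ of $\partial_{\p_1},\dots,\partial_{\p_{k-1}}$, each a dynamical symmetry of $X$. Adjoining $X$ itself gives $k$ pairwise commuting, $G$-invariant vector fields on $\cF$ whose projections $\partial_{\p_1},\dots,\partial_{\p_{k-1}},\sum_j\o_j\partial_{\p_j}$ span $T\base\cF$ at every point (here one uses $\o_k\neq0$, which follows from nonresonance). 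Since $\cF$ is compact these fields are complete, and because $G$-invariance means their flows commute with the $G$-action, the flows of $S_1,\dots,S_{k-1},X$ together with $G$ assemble into a smooth action of $A=\reali^k\per G$ on $\cF$.

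Next I would show this action is transitive and compute its isotropy. Transitivity holds because the $\reali^k$-factor projects onto the translation action of $\base\cF=\toro^k$ generated by the spanning vectors above, which is transitive, while $G$ is transitive on the fibres of $\pi$. Since $\dim A=k+\dim G=\dim\cF$, the isotropy $H=A_{p_0}$ is discrete and $\cF\cong A/H$. Projecting $H$ to the $\reali^k$-factor identifies it with the lattice $L\subset\reali^k$ of translations fixing a point of $\toro^k$, which has full rank $k$, being generated by $e_1,\dots,e_{k-1}$ and $v=(-\o_1/\o_k,\dots,-\o_{k-1}/\o_k,1/\o_k)$; freeness of the $G$-action makes this projection an isomorphism, so $H\cong\interi^k$ and $H=\{(\ell,\rho(\ell)):\ell\in L\}$ for a homomorphism $\rho:L\to G$ recording the `phases' of the lifts $S_j$ and of $X$. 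One checks $\rho$ is well defined up to conjugacy: it is constant along $\reali^k$-orbits and is conjugated under $G$.

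Now set $T:=\overline{\rho(L)}\subset G$, a compact abelian subgroup, and $d:=\dim T$, so $d\le\rank G$ since the identity component of $T$ lies in a maximal torus. The closure of the $\reali^k$-orbit through $p_0$ is the image of $\reali^k\per T$ in $A/H$, namely $(\reali^k\per T)/H$; being the closure of a continuous image of $\reali^k$ it is connected, it is compact (because $L$ is a full-rank lattice, $\reali^k/L=\toro^k$), and it is abelian, hence a torus, which a dimension count identifies as $\toro^{k+d}$. These orbit closures are permuted transitively by $G$ with set-stabiliser $T$, exhibiting $\cF\cong G\per_T\toro^{k+d}$ as a bundle over $G/T$ with $X$-invariant fibres $\toro^{k+d}$; this yields the fibration of (i) and, once $T$ is known to be a torus, statement (ii). For the frequencies I would pass to the universal cover $\reali^{k+d}=\reali^k\per\mathfrak{t}$ of the fibre, whose period lattice is generated by $\ker(\exp:\mathfrak{t}\to T)$ and by lifts $(\ell_i,\xi_i)$ of the generators of $H$ with $\exp\xi_i=\rho(\ell_i)$; the $X$-direction is the line through $(e_k,0)$, and its coordinates in a suitable integral basis are the frequencies. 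Choosing the basis adapted to the projection $\pi:\toro^{k+d}\to\base\cF=\toro^k$, which intertwines the $X$-flow with the reduced $\o$-flow, fixes the first $k$ frequencies as $\o_1,\dots,\o_k$ and leaves $d$ `external' frequencies $\nu_1,\dots,\nu_d$ determined by $\mathfrak{t}$ and the $\xi_i$; conjugation-invariance of $\rho$ makes $(\nu_1,\dots,\nu_d)$ locally constant, hence constant on the connected manifold $\cF$.

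The step I expect to be the \emph{main obstacle} is establishing that $T=\overline{\rho(L)}$ is genuinely a torus, i.e. connected. Because $\rho(L)$ is the image of a discrete group, its closure can a priori be a torus times a finite group, in which case the base of the canonical foliation is $G/T$ with $T$ disconnected rather than a torus, and the finite part of $T$ is absorbed by the fibre $\toro^{k+d}$ as a subharmonic of the reduced motion. Resolving this---either by showing connectedness under the stated hypotheses, or by passing to the finite cover of $\cF$ associated with $\pi_0(T)$, on which the construction returns a connected torus at the cost of replacing the reduced frequencies by subharmonics---is exactly the refinement of Theorem 2, and it is also the point at which the precise identification and the constancy of the external frequencies $\nu_1,\dots,\nu_d$ must be argued with care.
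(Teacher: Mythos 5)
Your reformulation is attractive: assembling the commuting lifts and the $G$-action into a transitive action of $A=\reali^k\times G$ realizes $\cF$ as $A/H$ with $H$ the graph of a monodromy homomorphism $\rho:L\to G$, which recovers the paper's Lemma~\ref{l:comm} (commutativity of the phases) for free and shows that the paper's chart $j$ of Lemma~\ref{l:j_m} is essentially a trivialization of this homogeneous space. But there is a genuine gap exactly at the step you flag, and your proposed resolution runs the wrong way. The group $T=\overline{\rho(L)}$, being the closure of a finitely generated subgroup, can indeed be disconnected under the stated hypotheses, and then your final normalization --- choosing a period-lattice basis adapted to $\pi$ so that the first $k$ frequencies are exactly $\o_1,\ldots,\o_k$ --- is impossible: the restriction of $\pi$ to an orbit closure is then a nontrivial covering of $\base\cF$, and the internal frequencies are forced to be integer mixtures and submultiples of $\o$ (the $\frac1r DM\o$ discussed after Theorem~\ref{th2}). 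Concretely, take the paper's own example $X=\partial_{\a_1}+\sqrt2\,\partial_{\a_2}+\frac12\,\partial_{\a_3}$ on $\toro^3$ with $G=S^1$ acting on $\a_3$, and the perfectly admissible lift $S_1=\partial_{\a_1}+\frac12\,\partial_{\a_3}$: with your lattice basis $e_1$, $v=(-1/\sqrt2,1/\sqrt2)$ one computes $\rho(e_1)=\frac12$ and $\rho(v)=0$, so $\rho(L)=\{0,\frac12\}$, $T\cong\interi_2$, $d=0$, and your orbit closures are the $2$-tori $\a_1-2\a_3=\const$ carrying frequencies $(\frac{\o_1}2,\o_2)$, not $(\o_1,\o_2)$. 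So your construction, when completed, proves a statement of the type of Theorem~\ref{th2} (minimal tori, nonresonant frequencies, base $G/(T_0F_0)$ with a finite factor), not Theorem~\ref{th1} as stated; and since your tori are already the minimal ones (closures of trajectories), no passage to finite covers can restore the internal frequencies $\o$.

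The missing idea --- the heart of the paper's proof --- is to avoid closing up the discrete group $\rho(L)$ altogether. Using commutativity of the phases $\bar\gamma_i$, the paper places them in a torus $\Tdue\subset G$, chooses logarithms $\eta_i\in\tdue$ with $\exp\eta_i=\bar\gamma_i$ (equivalently: extends $\rho$ to a continuous homomorphism $\reali^k\to\Tdue$), and replaces your $T$ by the closure $\Tuno$ of the \emph{connected} one-parameter group $\Gammauno=\{\exp(t\,\o\ccdot\eta)\}$, which is automatically a torus. Twisting the $\reali^k$-coordinate by this extension --- this is precisely the map $j(\a,g)=g\exp(-\a\ccdot\eta).\P^{\{S\}}_\a(\m)$ --- turns the sets $j(\toro^k\times g\Tuno)$ into genuine $(k+d)$-tori on which the flow is linear with internal frequencies exactly $\o$ and external frequencies $\nu$ given by $\o\ccdot\eta=\nu\ccdot\xi$ (whence \for{e:omega=Hnu}); the price, accepted in Theorem~\ref{th1}, is that $(\o,\nu)$ may be resonant and the fibers strictly larger than orbit closures (in the example above this gives $d=1$, $\nu=\frac12$, fiber $=\toro^3$). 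Note also that for $k\ge2$ the disconnectedness cannot be cured by a single cyclic cover as you envisage --- this is the paper's closing remark contrasting with the $k=1$ treatment of \cite{CDS} --- which is why the reverse direction (Theorem~\ref{th2}) requires the Smith-normal-form bookkeeping of Lemma~\ref{l:SNF}, absent from your sketch. Finally, one caution your route shares with the paper the moment logarithms are taken: commuting elements of a compact connected group need not lie in a common torus (e.g.\ the diagonal Klein four-group in $\mathrm{SO}(3)$), so the existence of $\Tdue$ containing the closure of the phase group is a point to be watched rather than taken for granted.
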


The above fibration, including the dimension $d$ of the tori, is in
general not unique. It will appear from the proof that, if $d<\rank
G$, then the same result is valid with tori of any dimension
$d+1,\ldots,\rank G$. In this statement it is not granted that the
frequencies $(\o,\nu)$ are nonresonant, in which case there might
exist fibrations of $\cF$ by tori of dimension smaller than such a $d$.
It is however always possible to obtain a similar
result with nonresonant frequencies, and hence tori of the minimal
dimension and as such uniquely defined. In doing so, however, one
realizes that the internal frequencies of the reconstructed motions might be
subharmonics of those of the reduced motions:

\begin{theorem}\label{th2}
Under the hypotheses of Theorem \ref{th1}:
\bList
\item[i.] There exist integers $0\le d_0\le\rank G$ and $r>0$, a
vector $\nu\in\reali^{d_0}$, a fibration of $\cF$ whose fibers
$\cT_m$ are $X$-invariant manifolds diffeomorphic to
$\toro^{k+{d_0}}$, and a covering map
$c_m: \toro^{k+{d_0}}\to\cT_m$ that relates the linear flow on
$\toro^{k+{d_0}}$ with frequencies $(\frac\o r,\nu)$, which are
nonresonant, to the flow of $X$ in $\cT_m$. 
\item[ii.] The base of this fibration of $\cF$ is diffeomorphic to $G/T$,
where $T$ is an abelian subgroup of $G$ isomorphic to $T_0\times F_0$, with
$T_0$ a ${d_0}$-dimensional torus of $G$ and $F_0$ a finite abelian
subgroup of $G$.
\eList
\end{theorem}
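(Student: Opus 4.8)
The plan is to build on Theorem~\ref{th1}, which already gives a fibration of $\cF$ by $X$-invariant tori $\toro^{k+d}$ on which $X$ is conjugate to the linear flow with frequencies $(\o,\nu)$, and a base $G/T$ with $T$ a $d$-dimensional torus. The point of Theorem~\ref{th2} is only that the frequencies produced there may be resonant, so I would first analyze the resonance lattice of the vector $(\o_1,\dots,\o_k,\nu_1,\dots,\nu_d)\in\reali^{k+d}$. Since $\o_1,\dots,\o_k$ are assumed nonresonant, any resonance must involve the external frequencies $\nu$ nontrivially. The strategy is to pass to the minimal subtorus of $\toro^{k+d}$ on which the linear flow is dense: the closure of an orbit is a subtorus $\toro^{k+\dzero}\subset\toro^{k+d}$ of dimension $k+\dzero$ with $\dzero\le d\le\rank G$, on which the restricted flow has nonresonant frequencies. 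This is the standard fact that the closure of a linear flow on a torus is a subtorus whose frequencies span the rational closure of the original frequency module.

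First I would fix one fiber $\cT$ of the Theorem~\ref{th1} fibration together with its conjugacy $h:\toro^{k+d}\to\cT$, and let $L\subset\interi^{k+d}$ be the resonance lattice of $(\o,\nu)$. Writing $L$ in terms of a basis adapted to the splitting $\interi^k\oplus\interi^d$, the nonresonance of $\o$ forces the projection of $L$ to the first $k$ coordinates to be injective on $L$, so that each resonance relation determines its $\toro^k$-part from its $\toro^d$-part. I would then choose an integer $r\ge1$ as the least common denominator arising when one expresses the dense subtorus's frequencies in the form $(\frac\o r,\nu)$: the rescaling of $\o$ by $1/r$ is exactly the mechanism by which subharmonics of the reduced frequencies enter, and $r$ records the index of the image of the minimal subtorus's fundamental group inside $\interi^{k+\dzero}$ when pushed to the $\toro^k$-factor. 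The covering map $c:\toro^{k+\dzero}\to\cT$ is then obtained by composing $h$ with the covering $\toro^{k+\dzero}\to\toro^{k+d}$ onto the closure subtorus, reparametrized so that the pulled-back flow is the linear flow with the nonresonant frequency vector $(\frac\o r,\nu)$.

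For part~(ii), I would track what the reduction of dimension from $d$ to $\dzero$ does to the base $G/T$. The torus $T\subset G$ of Theorem~\ref{th1} is replaced by the closure of the relevant one-parameter-type subgroup generated by the external dynamics together with the lattice identifications forced by passing to the dense subtorus; the extra identifications, coming from the kernel of the covering $c$ restricted to the $G$-directions, contribute a finite abelian group $F_0$, while the continuous part is a $\dzero$-dimensional torus $T_0$. Thus $T\cong T_0\times F_0$ as claimed, and the base becomes $G/T$ for this abelian subgroup.

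The main obstacle I expect is bookkeeping the interaction between the two lattices: the resonance lattice $L$ of $(\o,\nu)$ and the period lattice of the $G$-part that defines $T$ inside $G$. Concretely, the delicate step is to show that the identifications needed to make the frequencies nonresonant are \emph{compatible} with the group structure, i.e. that quotienting $\toro^{k+d}$ by the closure subtorus's complement descends to a genuine subgroup $T=T_0\times F_0$ of $G$ rather than merely a subset, and that the finite part $F_0$ is abelian. This requires using $G$-invariance of the lifts and the commutativity hypothesis from Theorem~\ref{th1} to guarantee that the external directions integrate to a torus action whose isotropy-type lattice splits as a continuous torus times a finite abelian group; the freeness of the original $G$-action on $M$ is what rules out any non-abelian or higher-rank complication in $F_0$.
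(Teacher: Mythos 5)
Your overall plan---fiber each Theorem \ref{th1} torus by the closures of the $X$-orbits, which are cosets of a $(k+\dzero)$-dimensional subtorus determined by the resonance lattice of $(\o,\nu)$, and then normalize the frequencies by passing to a covering torus---is the right circle of ideas, and it is close in spirit to what the paper does. But the two steps that carry the real content are asserted rather than proved, and one lattice claim is stated backwards. First, nonresonance of $\o$ does \emph{not} make the projection of the resonance lattice $L$ to the first $k$ coordinates injective: if $(p,q),(p,q')\in L$ then $(q-q')\cdot\nu=0$, so injectivity of that projection requires nonresonance of $\nu$, which is not a hypothesis but a fact that must be proved (the paper derives it from the density of $\{\exp(t\o\star\eta)\}$ in $T_1$, i.e.\ from $T_1$ having been defined as a closure). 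Nonresonance of $\o$ gives injectivity of the projection to the \emph{last} $d$ coordinates, which is what your second clause (``each resonance determines its $\toro^k$-part from its $\toro^d$-part'') actually expresses. Second, and more seriously, the claim that the flow on the orbit-closure torus can be presented, via a covering $c_m:\toro^{k+\dzero}\to\cT_m$, with frequency vector exactly $(\frac{\o}{r},\nu)$---all $k$ internal frequencies divided by one and the same integer $r$---is precisely the crux of the theorem, and ``reparametrized so that\ldots'' is not an argument. The orbit closure sits diagonally in $\toro^k\times T_1$, and a direct choice of angles on it yields frequencies of the form $(\frac{1}{r}DM\o,\nu)$ with $M$ unimodular and $D$ diagonal, as the paper remarks after the statement of Theorem \ref{th2}. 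To obtain the clean form one needs the Smith normal form of the resonance lattice (Lemma \ref{l:SNF}), putting the resonances as $p^i\cdot\o+r_i\nu'_i=0$ with a divisibility chain among the $r_i$, followed by the paper's key device: rescale the lifts, $S'_i=rS_i$ with $r=\max_i r_i$, and shift the logarithms, $\eta'_i=r\eta_i+\d_i$ with $\d_i\in\tres$ and $\exp\d_i=e_G$. This straightens the orbit closures into the sets $j'(\toro^k\times gT_0)$ at the price of $j'$ being an $r^k$-fold covering rather than a diffeomorphism. None of this normalization appears in your sketch, and without it the specific form $(\frac{\o}{r},\nu)$ of the frequencies is unjustified.

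Part ii.\ in your proposal is likewise only a gesture. You must exhibit the set of $g\in G$ preserving a fixed fiber as an actual subgroup $T=T_0F_0$ and show the base is the homogeneous space $G/T$; in the paper $F_0$ arises concretely from the deck transformations of $j'$, namely $F_0=\{\exp(r^{-1}u\star\d)\,:\;u\in\interi_r^k\}$, with $K=\{u\in\interi_r^k\,:\;u\cdot p^j/r_j\in\interi\ \mathrm{for\ all}\ j\}$ the subgroup of deck transformations preserving $\toro^k\times T_0$, so that $F_0\cong\interi_r^k/K$ and the covering multiplicities work out. Also, your reason for $F_0$ being abelian is off: freeness of the $G$-action is not what rules out noncommutativity; $F_0$ is abelian because it, like $T_0$, lies inside the torus $T_1$, and $T_0\cap F_0=\{e_G\}$ is what gives $T_0F_0\cong T_0\times F_0$. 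In short, your proposal correctly identifies the fibers as orbit closures and correctly locates the role of the resonance lattice and of subharmonics, but it omits the Smith-normal-form normalization, the rescaled-lift covering $j'$, and the deck-group construction of $F_0$, which are exactly the steps where the theorem is actually proved.
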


The proofs of the two theorems are given in Section \ref{s:proof}.

\subsection{Comments on Theorem 1}
\label{s:comments1}
What Theorem \ref{th1} actually deals with is not the preimage under
the canonical projection of a single quasi-periodic orbit, but of the
entire invariant torus. This is why we prefer the expression `relative
quasi-periodic torus' to `relative quasi-periodic orbit'.

Theorem \ref{th1} may be regarded as an extension to the
quasi-periodic case of the results for relative periodic orbits
\cite{field2,krupa,hermans,field3,CDS}. In the case of relative
periodic orbits $k=1$, and the hypotheses of the theorem yield no
conditions and are hence always satisfied.

We remark that under the hypotheses of Theorem \ref{th1}, there are
not only $k-1$, but $k$ pairwise commuting, $G$-invariant
dynamical symmetries of $X$ that are lifts of the generators
$\partial_{\p_1},\ldots,\partial_{\p_k}$. In fact, let
$S_1,\ldots,S_{k-1}$ be lifts of, say,
$\partial_{\p_1},\ldots,\partial_{\p_{k-1}}$, that have these
properties and define
\begin{equation}\label{ultimafase}
  S_k=\frac1{\o_k}\Big(X-{\sum_{j=1}^{k-1}} \o_j S_j\Big) \,;
\end{equation}
then $\pi_*S_k=\partial_{\p_k}$ and  $S_1,\ldots,S_k$ are lifts of
$\partial_{\p_1},\ldots,\partial_{\p_k}$ with these same properties.
In the proof, we will use such a set of $k$ lifts. 

Clearly, the lifts $S_1,\ldots,S_k$ define an action of $\reali^k$
that commutes with the action of $G$ and leaves $X$ invariant. To some
extent, the proof of Theorem \ref{th1} consists in showing
that this action is in fact an action of $\toro^k$. Therefore, under
the hypotheses of Theorem \ref{th1}, the relative quasi-periodic
torus $\cF$ is invariant under an action of the bigger group
$\toro^k\times G$. We will come back on this in Section
\ref{s:3.1} (Theorem 3) and in the Remark at the end of Section
\ref{s:proof3}.

There are two rather natural generalizations of Theorem 1 to
non-connected Lie groups and to non-free actions. 
\bList
\item[1.] Non-connected Lie
groups are of the form $G = H\rtimes K$, with $H$ the connected
component of the identity and $K$ a finite group. If the hypotheses of
Theorem 1 are satisfied, then they are also satisfied with the
$G$-action replaced with that of its normal subgroup $H$. The torus
$\widehat{\cF}$ embedded in $M/G$ is covered, with fiber
possibly a subgroup of $K$, by a torus embedded in $M/H$. The space
$\cF$ is the preimage of this last torus, and the fibration of
$\cF$ by tori can be obtained precisely with the same
arguments.

\item[2.] If the $G$ action is non-free, choose a point $m\in \cF$
and let $H$ be its stabilizer. Consider $\cF_H$, the
submanifold of all points of $\cF$ with orbit type $H$
(see \cite[pag. 16]{audin}). The vector field $X$ and the lifts
$S_1,\ldots,S_k$
described in Section \ref{s:2.1} are tangent to $\cF_H$, and the
phases belong to $N(H)/H$ (where $N(H)$ is the normalizer of $H$).
The hypotheses of Theorem 1 apply to the manifold $\cF_H$ under
the free action of $N(H)/H$, and the theorem provides a foliation of
$\cF_H$ in tori of dimension $k+d$ with $d$ at most the rank of
$N(H)/H$ (which is less that the rank of $G$). The foliation in tori
can then be induced in the whole manifold $\cF$ using the $G$
action. 
\eList

\vskip2mm\noindent
{\it Remark: } The proof of 
Theorem \ref{th1} is trivial when $G$ is abelian. A compact abelian group is
a torus, hence $G \cong \toro^d$, and one can use independent
infinitesimal generators of the torus action as additional dynamical
symmetries that commute with $X$ and with $S_1, \ldots, S_k$; so, $X$
is a $\toro^{k+d}$-invariant vector field in $\toro^{k+d}$.

\subsection{Comments on Theorem 2}
\label{s:comments2}

The reason why, in Theorem \ref{th2}, the reconstructed motions may
have the first $k$ frequencies that are submultiples of those
$\o_1,\ldots,\o_k$ of the reduced motion can be understood on a
simple example. The vector field
$$
  X = \partial_{\a_1}+\sqrt2\,\partial_{\a_2}+\frac12\,\partial_{\a_3}
$$
on $\cF=\toro^3\ni(\a_1,\a_2,\a_3)$ is equivariant under the action
of $G=S^1$ by translations of the third angle. The reduced system is
the vector field $\widehat X = \partial_{\a_1}+\sqrt2\partial_{\a_2}$
on $\base\cF=\toro^2\ni(\a_1,\a_2)$, so $k=2$ and
$(\o_1,\o_2)=(1,\sqrt 2)$. A possible reconstruction would obviously
add back the third angle, leading to (a single) reconstructed torus
of dimension $3$, with internal frequencies $\o=(1,\sqrt2)$, $d=1$
and external frequency $\nu=\frac 12$. But the dynamics of the
unreduced system is dense in the 2-dimensional subtori of
$\cF=\toro^3$ given by $\a_1-2\a_3=\const$. This suggests that it
should also be possible to perform the reconstruction process so as to
have ${d_0}=0$ and a fibration of $\cF=\toro^3$ by two-dimensional
reconstructed tori, with no external frequencies. In fact, in the
coordinates $\p_1=\a_3$, $\p_2=\a_2$, $\p_3=\a_1-2\a_3$ on  $\cF$,
the unreduced vector field is
$$
  X = \frac12\,\partial_{\p_1} + \sqrt2\,\partial_{\p_2}
$$
and its flow is conjugate to the linear flow on the  two-dimensional
tori $\p_3=\const$ with frequencies $(\frac12,\sqrt 2)=
(\frac{\o_1}2,\o_2)$. 

In Theorem \ref{th2}, the dimension $k+{d_0}$ of the reconstructed
torus may be anything between $k$ and $k+\rank \, G$. Mathematical
examples of all these situations may of course be easily constructed.
A very interesting example  coming from nonholonomic mechanics is an 
$n$-dimensional generalization of the classical Veselova system
\cite{veselova1,veselova2} considered in
\cite{fedorov-jovanovic}. Such problem describes the motion of an
$n$-dimensional rigid body with fixed point subject to a nonholonomic
constraint. The phase space of the system is a vector subbundle
$\mathcal D$ of $TSO(n)$ of rank $n-1$ (and hence a manifold of
dimension $n(n-1)/2+(n-1)$), and the systems has an ${\rm SO}(n-1)$
symmetry-group. The reduced dynamics takes place in $\mathcal
D/SO(n-1)$ (diffeomorphic to $TS^{n-1}$) and, up to a time
parametrization, is quasi-periodic in $(n-1)$-dimensional tori. Using
techniques different from those used here, Fedorov and Jovanovic
\cite{fedorov-jovanovic} prove that also the unreduced dynamics is,
in the new time variable, quasi-periodic in tori of dimension $n-1$
which foliate the phase space $\mathcal D$. 
If this case could be analyzed in the realm of our
approach, complete resonance between the `internal' and `external'
frequencies $\omega$ and $\nu$ would be found, making
${d_0}=0$.\footnote{A personal communication with Y.N. Fedorov
suggests that (part of) the frequencies of the unreduced motions are
subharmonics of those of the reduced motions.}

\section{Comparison with reducibility }
\label{s:Comparison}

\subsection{Reducibility and its equivalence to our hypotheses}
\label{s:3.1}
We prove here the equivalence of our approach and of that based on
the reducibility of the reconstruction equation, considered by Zenkov
and Bloch \cite{zenkov}. We adapt
the treatment in \cite{zenkov}, that considers a specific problem\footnote{A
nonholonomic mechanical system with an $\mathrm{SO}(n)$ symmetry,
which is an $n$-dimensional generalization of the classical Suslov
problem \cite{suslov,fedorov-kozlov} whose reduced dynamics is
quasi-periodic}, to our general setting. 

We consider a situation similar to that of Section \ref{s:2.1},
with a free action
$\Psi$ of a compact and connected Lie group $G$ on a manifold $\cF$
and $\base \cF = \cF/G$ diffeomorphic to $\toro^k$. 
As is typical in reducibility theory, we assume that $\cF\to\cF/G$
is a trivial $G$-bundle, so that there is an equivariant
diffeomorphism from $\cF$ onto $\toro^k\times G$. (In the sequel, 
`equivariant' means always equivariant with respect to the
$G$-action $\Psi$ on $\cF$ and the $G$-action on $\toro^k\times
G$ by left translations on the factor $G$). Furthermore,
we consider a $G$-invariant vector field on $\cF$, whose reduced flow
on $\base\cF$ is conjugate to a linear flow on $\toro^k$ with
frequencies $\o=(\o_1,\ldots,\o_k)$.   
Written in
$\toro^k\times G \ni (\varphi,g)$, the 
equations of motion of $X$ on $\cF$
take the form of the linear skew-product system
\begin{equation}\label{triv}
  \dot \varphi = \omega \,,\qquad 
  \dot g = g \xi(\varphi) 
\end{equation}
for some $\xi:\toro^k\to \mathfrak g$, the Lie algebra of $G$.
(For simplicity, we assume here
that the group is a matrix group so that $L_g\xi=g\xi$).
 The partially integrated equation
\begin{equation}\label{rec-eq}
  \dot g = g \xi(t\omega) 
\end{equation}
is then called the reconstruction equation for $X$. 

The reconstruction equation is said to be reducible if there exists a
map $a:\toro^k\to G$ such that the diffeomorphism
\begin{equation}\label{diffeo}
   (\varphi,g)\mapsto 
   \big(\varphi,g a(\varphi)^{-1} \big) =:(\varphi,h) 
\end{equation}
of $\toro^k\times G$ onto itself conjugates system \for{triv} to the system
\begin{equation}\label{triv2}
  \dot \varphi = \omega \,,\qquad 
  \dot h = h \rho
\end{equation}
with a constant $\rho\in \mathfrak g$.
This happens if and only if, writing for shortness
$\frac d{dt} a(\varphi)$ for
$\frac{\partial a}{\partial \varphi}(\varphi)\omega$, the map
$a$ is such that 
\begin{equation}\label{eq:reducibility}
   a(\varphi) \xi(\varphi) a(\varphi)^{-1}  
   - 
   \big[ \textstyle{\frac d{dt}} a(\varphi) \big] a(\varphi)^{-1}  
   \quad\mathrm{is \ constant} \,.
\end{equation}
Then $\rho$ equals such a constant and
the flow of equations \for{triv2} is quasi-periodic on tori of
dimension at most $k+\rank G$. 

We remark that while the reconstruction equation is always
reducible if the time dependence is periodic (Floquet theory), its
reducibility is generally unknown in the quasi-periodic case
(see e.g. \cite{puig} for a review of reducibility theory with
emphasis on the quasi-periodic case).

Reference \cite{zenkov} proves that the reducibility of the reconstruction
equation is equivalent to the invariance of the system under an extra
$\toro^k$-action.  We use this fact to prove the equivalence between
our hypotheses and reducibility.

\begin{theorem} Assume that a vector field $X$ on a compact manifold
$\cF$ is invariant under a free action $\Psi$ of a compact and connected
Lie group $G$. Assume that $\widehat\cF=\cF/G$ is diffeomorphic to a
$k$-dimensional torus and that the flow of the reduced vector field
$\widehat X$ on $\widehat\cF$ is quasi-periodic with nonresonant
frequencies. Then, the following three conditions are equivalent:

\bList
\item[i.] $\cF\to\cF/G$ is a trivial $G$-bundle 
and the reconstruction equation is reducible.
\item[ii.] 
There exists a free action $\chi$ of $\toro^k$ on $\cF$ that leaves
$X$ invariant, commutes with $\Psi$ and is such that the action
$(\chi,\Psi)$ of $\toro^k\times G$ on $\cF$ is free.
\item[iii.] There exist $G$-invariant lifts of $k-1$ among a set of
generators of
$\widehat{\cF}$ that commute among themselves and with $X$.
\eList
\end{theorem}

The proof of this theorem uses some constructions from the proof of
Theorem 1 and is postponed to Section \ref{s:proof3}.

\acapo
{\it Remarks:} 1. Reference \cite{zenkov} proves, in a particular case, 
the equivalence of the two conditions i. and
ii. of Theorem 3. However, some details are missing in that reference.
In particular, the hypothesis of the triviality of the $G$-bundle $\cF$
is only implicit (without it, it is not possible to write the
equations of motion in the form \for{triv} and therefore the
reconstruction equation \for{rec-eq}). Also, the freeness of the joint
$(\chi,\Psi)$ action in condition ii. is not noticed. 

2. In condition ii. of Theorem 3, the existence of an action of
$\mathbb T^k$ could be equivalently
replaced by the existence of an action of $\mathbb T^{k-1}$.

\subsection{Examples}
\label{s:3.2}

We compare now our approach with reducibiliy in two examples.

\vskip2mm\noindent
{\it Example 1. } Consider the vector field
\begin{equation*}
  X =\partial_{\varphi_1} +\sqrt{2} \partial_{\varphi_2} +
  \big(f_1(\varphi_1)+f_2(\varphi_2)\big) \xi_\mathrm{SO(3)} (g) 
\end{equation*}
on $\cF=\toro^2\times \mathrm{SO(3)} \ni
(\varphi_1,\varphi_2,g)$,
where $f_1, f_2 \in C^\infty(S^1)$ and $\xi\in \mathfrak{so}(3)$
($\xi_\mathrm{SO(3)}(g)\in T_g\mathrm{SO(3)}$ is the value at $g$
of the infinitesimal generator of $\xi$
associated to left multiplication). The group
$G=\mathrm{SO(3)}$ acts by left multiplication on the $\mathrm{SO(3)}$ factor
of $\cF$ and leaves $X$ invariant. The reduced space is
$\widehat {\cF}=\toro^2$ and the reduced vector field
\begin{equation*}
\widehat X =\partial_{\varphi_1} +\sqrt{2} \partial_{\varphi_2}
\end{equation*}
has quasi-periodic flow with frequencies
$(\omega_1,\omega_2)=(1,\sqrt{2})$. The vector field
\begin{equation*}
 S_1=\partial_{\varphi_1} + f_1 \xi_\mathrm{SO(3)}
\end{equation*}
is a lift of $\partial_{\varphi_1}$, is $\mathrm{SO(3)}$-invariant,
and commutes with $X$. 
Therefore, given that $\mathrm{SO(3)}$ has rank one, Theorem 1
(Theorem 2) 
implies that the flow of $X$ on $\cF$ is conjugate to a (nonresonant)
linear flow on tori
of dimension either two or three (which possibility is realized
depends on $f_1,f_2,\xi$). 

To investigate the reducibility of the system write
$f_1(\varphi_1)+f_2(\varphi_2)= \tilde f_1(\varphi_1)+ \tilde
f_2(\varphi_2)+\nu$ where $\tilde f_1$ and $\tilde f_2$ have zero
averages and $\nu\in \mathbb R$ is the sum of the averages of $f_1$ and $f_2$. 
The reconstruction equation is $\dot g = g \Omega(t,\sqrt2\,t)$
where
$$
  \Omega(\varphi_1,\varphi_2) =
  ( \tilde f_1 (\varphi_1) + \tilde f_2 (\varphi_2) +\nu)\xi  \,.
$$
A fundamental matrix of the reconstruction equation is 
$$
  A(t) 
  =
  \exp\left (\xi \int_0^t \Omega(s) \, ds \right ) 
  = 
  R\big( F_1(t) + 2^{-1/2}F_2(\sqrt{2}t)+ \nu t\big) 
$$
where $F_1, F_2\in C^\infty(S^1)$ are the primitives of $\tilde f_1,
\tilde f_2$ that vanish at $0$ and $R(x)$ is the rotation matrix
$\exp(x\xi)$.
Notice that $A(t)$ factors as $A(t)=\exp(t\nu \xi)a(t,\sqrt2\,t)$ where
$$
 a(\varphi_1,\varphi_2)
 =
 R\big( F_1(\varphi_1) + 2^{-1/2} F_2(\varphi_2)\big) 
$$
satisfies the reducibility equation \for{eq:reducibility}.

\vskip2mm\noindent
{\it Example 2. } Consider the vector field $X$ on $M=\toro^k\times
\toro^d\ni (\varphi,\vartheta)$
\begin{equation*}
\label{E:X}
  X=\sum_{i=1}^k \omega_i \partial _{\varphi_i} + \sum_{I=1}^d \xi_I (\varphi) 
   \, \partial _{\vartheta_I}
\end{equation*}
with nonresonant $\o_1,\ldots,\o_k$.
$X$ is invariant under the action of $G=\toro^d$ on $M$ by translations
on the second factor. Our approach requires the existence of lifts 
\begin{equation*}
  S_i= 
  \partial_{\varphi_i} + \sum_{I=1}^d b_{iI}(\varphi)\, \partial_{\vartheta_I}
  \,,\qquad i=1,\ldots,k \,,
\end{equation*}
such that 
\begin{equation}\label{E:Lift0}
   [S_i, S_j]=0, \qquad [S_i, X] =0, \qquad \forall i,j=1,\dots , k.
\end{equation}
Note that it is not restrictive to assume that the $b_{iI}$ have zero
averages. Conditions \for{E:Lift0} are equivalent to
\begin{eqnarray}
\label{E:Lift1}
 &\frac{\partial b_{jI}}{\partial \varphi_i}- 
   \frac{\partial b_{iI}}{\partial \varphi_j} =0 
   \qquad \forall \, i,\,  j,\, I\, ,\\
 \label{E:Lift2}
 & \frac{\partial \xi_{I}}{\partial \varphi_j}-
    \sum_{i=1}^k \omega_i\frac{\partial b_{jI}}{\partial \varphi_i}=0
    \qquad  \forall\, j, \, I.
\end{eqnarray}

In this case system \for{triv} is
$\dot\varphi=\o $, $\dot\vartheta = \xi(\varphi)$
and, the group being abelian, the reducibility equation
\for{eq:reducibility} is
\begin{equation}\label{E:Lift3}
  \xi(\varphi) - \frac{\partial a}{\partial \varphi}(\varphi) \o = \const
\end{equation}
and implies
$$
  \frac{\partial \xi_I}{\partial \varphi_i}( \varphi) 
  - \sum_{j=1}^k 
  \frac{\partial^2 a_I}{\partial \varphi_i\, \partial \varphi_j}
  \o_j = 0 \qquad\forall \,i,I  \,.
$$
This last equation coincides with \for{E:Lift2} if $b_{iI}=
\frac{\partial a_I}{\partial \varphi_i}$ and this set of $b_{iI}$
satisfies the closure conditions \for{E:Lift1}. 

Conversely, assume that our conditions are satisfied. Since the
$b_{iI}$ have zero average, equation  \for{E:Lift1} implies the existence of
functions $a_{I}$ such that $b_{iI}=\frac{\partial a_I}{\partial
\varphi_i}$. Using the connectedness of the torus,
integrating equation \for{E:Lift2} gives the reducibility equation
\for{E:Lift3}.

\vskip2mm
{\it Remark: } {Proving} the existence of solutions $b_{iI}$ of
equations \for{E:Lift1}, \for{E:Lift2} is a different matter, that relies
on the convergence of series that involve small denominators and
depends on arithmetic properties of $\o_1,\ldots,\o_k$ and on
properties of the
Fourier expansions of the $\xi_I$. This observation is in \cite{Zung},
who derives equation \for{E:Lift2} in the simplest case $k=1$, $d=2$
as a condition for the integrability of $X$.

\section{Proof of the theorems}
\label{s:proof}

\subsection{Phases of commuting lifts of reduced periodic vector fields }
\label{s:proof0}
First we recall the definition of phase for the periodic  case,
from \cite{field2,hermans,CDS}. We say that a $G$-invariant
vector field $S\in\cX(\cF)$ has {\it periodic reduced flow} if the
reduced vector field $\base{S}:=\pi_*(S)$ has periodic flow with
positive smooth period function $\base{p}:\pi(\cF)\to\reali_+$; we
call $p:=\base p\circ \pi$ the {\it lifted period} of $S$. 
The action of $G$ on $\cF$ will be denoted by a dot. 

If $S\in\cX(\cF)$ has periodic reduced flow with lifted period $p$, then
for any $m\in \cF$, $\P^{S}_{p(m)}(m)$ belongs to the $G$-orbit of
$m$ and, given that the action is free, 
$$
  \P^{S}_{p(m)}(m)=\gamma(m).m 
$$
for a unique element $\gamma(m)\in G$. This defines a map
$\gamma:\cF\to G$ that we call {\it phase} of~$S$ (monodromy and
shift are also used). It is known that this map is smooth, has the
$G$-equivariance property
\begin{equation*}
   \gamma(g. m) = g\gamma(m) g^{-1} \qquad \mathrm{for\ all} \ g\in G
\end{equation*}
and is constant along the flow of $S$:
$\gamma(\Phi_t^{S}(m))=\gamma(m)$ for all $t\in\reali$ and $m\in \cF$,
see e.g.~\cite{fasso-giacobbe}.

We now consider phases of commuting vector fields:

\begin{lemma}\label{l:comm} Let $S$ and $S'$ be two $G$-invariant
commuting vector fields
on $\cF$ with periodic reduced flows. Then:

i.  For each $m\in \cF$, their phases $\gamma(m),\gamma'(m)$ commute.

ii. The phase of $S$ is constant along the flow of $S'$.

\end{lemma}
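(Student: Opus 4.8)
The plan is to prove both statements from the single structural fact that the flows of $S$ and $S'$ commute, together with the freeness of the action, which together force the phases to interact in the required way. Throughout I write $p=\base{p}\circ\pi$ and $p'=\base{p}'\circ\pi$ for the lifted periods, so that $\P^{S}_{p(m)}(m)=\gamma(m).m$ and $\P^{S'}_{p'(m)}(m)=\gamma'(m).m$ for every $m\in\cF$. I will use repeatedly that, $S$ and $S'$ being $G$-invariant, their flows commute with the action, $\P^{S}_t(g.m)=g.\P^{S}_t(m)$ and likewise for $S'$. I also record that the reduced fields $\base{S},\base{S}'$ commute (they are $\pi$-related to the commuting fields $S,S'$), a fact that will be needed only for part ii.

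For part i the idea is to evaluate $\P^{S}_{p(m)}\circ\P^{S'}_{p'(m)}$ at $m$ in the two possible orders and compare. Flowing first along $S'$ for its lifted period gives $\P^{S'}_{p'(m)}(m)=\gamma'(m).m$; applying $\P^{S}_{p(m)}$ and using $G$-invariance of $S$ to pull the group element out, followed by the defining relation $\P^{S}_{p(m)}(m)=\gamma(m).m$, yields $\big(\gamma'(m)\gamma(m)\big).m$. Performing the two flows in the opposite order and using $G$-invariance of $S'$ in the same way yields $\big(\gamma(m)\gamma'(m)\big).m$. Since the flows commute the two results agree, and freeness of the action lets me cancel $m$, giving $\gamma'(m)\gamma(m)=\gamma(m)\gamma'(m)$. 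Note that this argument never evaluates $p,p',\gamma,\gamma'$ at any auxiliary point, so no information about how the period functions vary is needed here.

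For part ii, fix $m$ and set $n=\P^{S'}_t(m)$. Commuting the flows and using $G$-invariance of $S'$ gives the key identity $\P^{S}_{p(m)}(n)=\P^{S'}_t\big(\P^{S}_{p(m)}(m)\big)=\P^{S'}_t\big(\gamma(m).m\big)=\gamma(m).n$; that is, the time-$p(m)$ map of $S$ moves every point of the $S'$-orbit of $m$ by the fixed group element $\gamma(m)$. To read off $\gamma(n)$ from this I must compare $p(m)$ with the period $p(n)$ attached to $n$: once I know $p(n)=p(m)$, then $\gamma(m).n=\P^{S}_{p(n)}(n)=\gamma(n).n$, and freeness gives $\gamma(n)=\gamma(m)$, which is the claim.

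The main obstacle is therefore to show that the lifted period $p$ is constant along the flow of $S'$, i.e. $p(n)=p(m)$. I would establish this downstairs: since $\base{S}$ and $\base{S}'$ commute, the reduced flow $\base{\P}^{\base{S}'}_t$ is a symmetry of $\base{S}$ and hence conjugates the periodic flow of $\base{S}$ to itself, carrying each $\base{S}$-orbit to another $\base{S}$-orbit while preserving its period lattice; consequently the minimal period is invariant along $\base{S}'$-orbits, and since $\base{p}$ is a continuous integer multiple of it along the (connected) $\base{S}'$-orbit, $\base{p}$ itself is constant there. Pulling back by $\pi$ gives $p(n)=p(m)$ and closes the argument. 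Everything apart from this period-invariance step is a direct manipulation of the defining relations; the invariance step is where the commutativity hypothesis does its essential work.
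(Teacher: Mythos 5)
Your proof is correct and follows essentially the same route as the paper: both parts compose the two flows in opposite orders, pull group elements through via $G$-invariance, and cancel by freeness of the action. The only difference is that you supply a justification (invariance of the period lattice under the conjugating flow of $\base{S}'$, plus continuity and connectedness of the orbit) for the constancy of the lifted period $p$ along the flow of $S'$, a step the paper merely asserts from the commutativity of $\pi_*S$ and $\pi_*S'$.
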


\begin{proof}
i. If $m\in \cF$, then
$$
  \Phi^{S'}_{p'(m)}( \Phi^{S}_{p(m)}(m)) = 
  \Phi^{S'}_{p'(m) }( \gamma (m).m) = 
  \gamma(m). \Phi^{S'}_{p'(m)}(m) = 
  \gamma (m)\gamma'(m).m 
$$
where the second equality follows from the $G$-invariance of $S'$ and
the fact that
the lifted period $p'$ of $S'$ is constant along $G$-orbits.
Since the flows of $S$ and $S'$ commute we conclude that
$\gamma(m)\gamma'(m).m = \gamma'(m)\gamma(m).m$; by the freeness of
the action, $\gamma'(m) \gamma(m) = \gamma(m)\gamma'(m)$.

ii. Let $m'= \P^{S'}_t(m)$ for some $t\in\reali$. 
Since the projected vector fields $\pi_*S$ and $\pi_*S'$ commute,
the lifted period of
$S$ is constant along the flow of $S'$. Thus $p(m')=p(m)$ and
$\gamma(m').m' 
= \P^{S}_{p(m')}(m')
= \P^{S'}_t( \P^{S}_{p(m)}(m)) 
= \P^{S'}_t( \gamma(m).m) 
= \gamma(m).\P^{S'}_t( m) 
= \gamma(m).m'$, so that $\gamma(m')=\gamma(m)$.
\end{proof}

\subsection{Proof of Theorem 1}
\label{s:proof1}
As noted in Section
\ref{section2}, see equation (\ref{ultimafase}), under the hypotheses
of Theorem~1 there exist $G$-invariant lifts
$S_1,\ldots,S_k\in\cX(\cF)$ of
$\partial_{\p_1},\ldots,\partial_{\p_k}$ which pairwise commute
and are such that
$$
  X = \o_1 S_1+\ldots+ \o_kS_k \,,
$$
and hence commute with $X$ as well. Thus
$$
  \P^X_t \ug \P^{S_1}_{t\o_1}\circ \cdots \circ\P^{S_k}_{t\o_k} 
  \qquad \forall t\in\reali \,.
$$
We will write $\{S\}$ for the collection $\{S_1,\ldots,S_k\}$ of
the lifts and
$$
  \P^{\{S\}}_x:=
  \P^{S_1}_{x_1}\circ\cdots\circ\P^{S_k}_{x_k} \qquad
  \forall \ x=(x_1,\ldots,x_k) \in\reali^k \,.
$$
Thus $\P^X_t=\P^{\{S\}}_{\o t}$, with $\o=(\o_1,\ldots,\o_k)$.

Fix a point $\m\in\cF$. The vector fields $S_1,\ldots,S_k$ are
$G$-invariant and have reduced periodic flows with unit period. Let
$\bar\gamma_1,\ldots,\bar\gamma_k$ be their phases at the point $\m$,
$\bar\gamma_i=\gamma_i(\m)$ if $\gamma_i$ is the phase of $S_i$. Due
to the commutativity of the phases, see Lemma 1, the set
$$
 \Gamma_2 := \{\bar\gamma_1^{z_1}\cdots\bar\gamma_k^{z_k} \,:\;
  (z_1,\ldots,z_k)\in\interi^k \}
$$
is an abelian subgroup of $G$. Let $\Tdue$ be a torus in $G$
that contains the closure of $\Gamma_2$. (Recall that a
torus in a Lie group $G$ is any connected compact abelian Lie
subgroup of $G$; all tori of $G$ have dimension not greater than the
rank of $G$, which is in fact the dimension of the `maximal tori' 
\cite{2003.Broker.tom Dieck, field3}). 
Let $\tdue\subseteq\g$ be the Lie algebra of $\Tdue$. Then, there exist
vectors $\eta_1,\ldots,\eta_k\in\tdue$ such that
$\exp(\eta_i)=\bar\gamma_i$ for $i=1,\ldots,k$. For economy of exposition,
we will call these vectors `logarithms' of the phases at $\m$;
they are not unique (being defined up to the addition of elements of
$\tdue$ that exponentiate to the group identity),
so we make a choice of them. We will write
$\eta=(\eta_1,\ldots,\eta_k)$ and, if $x\in\reali^k$, $x\star \eta$
for $x_1\eta_1+\ldots +x_k\eta_k$.

\begin{lemma}\label{l:j_m}
The map\footnote{
To keep the notation simple, we ordinarily avoid distinguishing
between a point $\a\in\reali^k$ and the equivalence class
$\a(\mathrm{mod1})\in\toro^k$. For instance, here, the right hand
side of the definition of the map $j$ is a function on
$\reali^k\times G$; the correctness of this
definition is checked in the proof.} 
\begin{equation}\label{j}
  j: \toro^k\per G \to \cF \,,\qquad
  j(\alpha,g) 
  = g\,\exp(-\alpha\star \eta) . \Phi^{\{S\}}_\alpha(\m) 
\end{equation}
is a diffeomorphism.
\end{lemma}

\begin{proof} 
The map $j$ is well defined because $j(\alpha+z,g)=j(\alpha,g)$ for
all $z\in \mathbb{Z}^k$. In fact
$$
 j(\alpha+z,g)=g\exp(-\a\star\eta)\exp(-z\star\eta)
.\P^{\{S\}}_\a(\P^{\{S\}}_z(\m)) = j(\alpha,g)
$$
given that, by the $G$-invariance of the lifts,
$$
 \P^{\{S\}}_\a(\P^{\{S\}}_z(\m)) = 
 \P^{\{S\}}_\a(\bar\gamma_1^{z_1}\ldots\bar\gamma_k^{z_k} .\m) = 
 \exp(z\star\eta) . \P^{\{S\}}_\a(\m) \,.
$$
To prove that $j$ is a diffeomorphism it suffices to show that
it is a local diffeomorphism at each point, and that it is bijective.

To prove that $j$ is a local diffeomorphism, consider a vector $(a,
\xi)\in \reali^k\times \g$. Let $v\in T_{(\a,g)}(\toro^k\times G)$ be
the vector which is tangent at $t=0$ to the curve $t\mapsto
(\alpha+ta,\exp(t\xi)g)$. Since the image via $j$ of this curve is
$$
  t\mapsto \exp(t\xi) g \exp(-(\alpha+at)\star\eta) .
  \P^{\{S\}}_{\alpha + t a}(\m) \,,
$$
by differentiation one finds
$$
   T_{(\a,g)}j\cdot v=X_\xi(j(\a,g)) - \sum_i a_iX_{\eta_i}(j(\a,g))  + 
   \sum_ia_iS_i(j(\a,g))
$$
where $X_\xi,X_{\eta_j}$ are the infinitesimal generators of the
group action associated to the elements $\xi,\eta_i\in\g$. Since the
$S_i$'s are not tangent to the orbits of $G$, the vanishing of
$T_{(\a,g)}j\cdot v$
requires $a=0$ and hence the vanishing of $X_\xi$; thus $\xi=0$.

To prove injectivity assume that $j(\alpha,g)=j(\beta,h)$. Then
\begin{equation}\label{inj}
  \P^{\{S\}}_{\a-\beta}(\m) 
  = 
  \exp(\a\star\eta)g^{-1}h\exp(-\beta\star\eta).\m
\end{equation}
and $\P^{\{S\}}_{\a-\beta}(\m)$ must be in the $G$-orbit of $\m$.
This happens if and only if $\alpha-\beta\in\interi^k$. Hence 
$
 \P^{\{S\}}_{\a-\beta}(\m) 
 = 
 \bar\gamma_1^{\a_1-\beta_1} \ldots \bar\gamma_k^{\a_k-\beta_k}.\m
 = 
 \exp\big((\a-\beta)\star \eta\big).\m
 = 
 \exp(\a\star \eta) \exp(-\beta\star \eta).\m
$
and equality \for{inj} reduces to 
$
 \exp(\a\star \eta).( \exp(-\beta\star \eta).\m)
 = 
 \exp(\a\star\eta)g^{-1}h.(\exp(-\beta\star\eta).\m
$, 
which gives $g^{-1}h=e_G$, the identity of the group.
Thus $\a\equiv\beta$ ($\mod 1$) and $g=h$.

Finally, for any $m\in \cF$ there exist $\beta\in \toro^k$ such that
$\Phi_{\beta}^{\{\hat S\}}(\pi(m))=\pi(\m)$. Hence
$\Phi_{\beta}^{\{\hat S\}}(\pi(m))=h.\m$ for some $h\in G$
and $m = j (\beta, h\exp(\beta\star \eta))$. Thus, $j$ is surjective.
\end{proof}

\vskip4mm
Note now that $\m=j(0,e_G)$, where $e_G$ is the identity of $G$, 
and
\begin{equation}\label{flusso-su-j}
  \P^X_t(j(\a,g)) \ug j(\a+t \o, g\exp(t\o\star \eta)) 
  \qquad \forall t,\, \a,\, g \,.
\end{equation}
The one-parameter subgroup 
$$
  \Gamma_1:=\{\exp(t\o\star\eta):\, t\in\reali\}
$$
of $G$ is an abelian and connected subgroup of $T_2$.
Hence, its closure is a torus
$\Tuno\subseteq \Tdue$ of some dimension ${d_1}\le{d_2}$.

The set
$$
  \cP_{\m}:=
  j(\toro^k\times \Tuno )
$$
is diffeomorphic to $\toro^k\per \Tuno$ and is invariant under the flow
of $X$: from \for{flusso-su-j}, if $(\beta,h)\in\toro^k\times \Tuno$ then
$
 \P^X_t(j(\beta,h)) = 
 j(\beta+\o t,h\exp(t\o \star \eta)) \in \cP_\m
$
because both $h$ and $\exp(t\o \star \eta)$ belong to the
subgroup $\Tuno$. If $m\in \cF$ then $m=j(\beta,g)$ for unique
$\beta\in\interi^k$, $g\in G$ and we define
$$
  \cP_m := g.\cP_\m \,.
$$

\begin{lemma}\label{l:P_m}
\ 
\bList
\item[i.] For any $m\in\cF$, $\cP_{m}$ is diffeomorphic to
$\toro^k\per \Tuno$ and is $X$-invariant.
\item[ii.] For any $m,m'\in \cF$, the sets $\cP_{m}$ and $\cP_{m'}$
are either equal or disjoint.
\item[iii.] The sets $\cP_{m}$, $m\in\cF$, are the fibers of a
fibration of $\cF$ with base diffeomorphic to~$G/\Tuno$. 
\eList
\end{lemma}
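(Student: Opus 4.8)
The plan is to exploit the fact that the diffeomorphism $j$ of Lemma \ref{l:j_m} intertwines the $G$-action on $\cF$ with left multiplication on the $G$-factor of $\toro^k\times G$, and then to deduce all three statements from the coset structure of the torus $\Tuno$ in $G$. Concretely, directly from the definition of $j$ and from the fact that the $G$-action is a left action,
\begin{equation*}
  g.j(\a,h)=\big(gh\exp(-\a\ccdot\eta)\big).\P^{\{S\}}_\a(\m)=j(\a,gh)
  \qquad\forall\,g,h\in G,\ \a\in\toro^k.
\end{equation*}
Writing $m=j(\beta,g)$, where $(\beta,g)\in\toro^k\times G$ is the unique preimage of $m$ under $j$, this gives
\begin{equation*}
  \cP_m=g.\cP_\m=g.\,j\big(\toro^k\times\Tuno\big)=j\big(\toro^k\times g\Tuno\big),
\end{equation*}
$g\Tuno$ being the left coset of $\Tuno$. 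I expect this rewriting to be the crux of the argument: once each $\cP_m$ is presented as the $j$-image of $\toro^k$ times a coset of $\Tuno$, the three claims become standard facts about cosets and about quotients of a compact group by a closed subgroup.

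For part i., left translation by $g$ maps $\Tuno$ diffeomorphically onto $g\Tuno$, so $\toro^k\times g\Tuno$ is a closed submanifold of $\toro^k\times G$ diffeomorphic to $\toro^k\times\Tuno$; restricting the global diffeomorphism $j$ then yields $\cP_m\cong\toro^k\times\Tuno$. For $X$-invariance I would either use that the flow of the $G$-invariant field $X$ is $G$-equivariant, whence $\P^X_t(\cP_m)=g.\P^X_t(\cP_\m)=g.\cP_\m=\cP_m$, or check it directly from \for{flusso-su-j}, noting that $h\in g\Tuno$ implies $h\exp(t\o\ccdot\eta)\in g\Tuno$ because $\exp(t\o\ccdot\eta)\in\Gammauno\subseteq\Tuno$ and $\Tuno$ is a subgroup. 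For part ii., injectivity of $j$ gives $\cP_m\cap\cP_{m'}=j\big(\toro^k\times(g\Tuno\cap g'\Tuno)\big)$ and $\cP_m=\cP_{m'}$ if and only if $g\Tuno=g'\Tuno$; since two left cosets of $\Tuno$ are either equal or disjoint, the same dichotomy holds for $\cP_m$ and $\cP_{m'}$.

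For part iii. I would introduce the map $\Pi:\cF\to G/\Tuno$ obtained by composing $j^{-1}$, the projection $\toro^k\times G\to G$ onto the second factor, and the canonical projection $p:G\to G/\Tuno$. This is smooth and, by the computation above, its fibers are exactly the sets $\cP_m$. To see that $\Pi$ is a genuine (locally trivial) fibration and not merely a partition, I would transport the question through $j$: the map $\Pi$ corresponds to $\mathrm{id}_{\toro^k}\times p:\toro^k\times G\to G/\Tuno$. As $\Tuno$ is a closed subgroup of the compact Lie group $G$, the quotient $G/\Tuno$ is a smooth manifold and $p$ is a principal $\Tuno$-bundle; taking the product with $\toro^k$ preserves local triviality, so $\Pi$ is a locally trivial fibration with fiber $\toro^k\times\Tuno$ and base $G/\Tuno$.

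None of these steps is genuinely difficult; the one place that uses input beyond bookkeeping is the local triviality in part iii., which I would take for free from the standard fiber-bundle structure of $G\to G/\Tuno$ rather than build charts by hand. The only subtlety to keep in view is that $\cP_m$ is well defined precisely because $j$ is bijective, so the element $g$, and hence the coset $g\Tuno$, attached to $m$ is unique.
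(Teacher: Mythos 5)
Your proposal is correct and follows essentially the same route as the paper: you establish the equivariance identity $g.j(\a,h)=j(\a,gh)$, rewrite $\cP_m=j(\toro^k\times g\Tuno)$, and then deduce i.--iii.\ from the coset dichotomy and the fibration $\toro^k\times G\to \toro^k\times(G/\Tuno)$, exactly as in the paper's proof of Lemma \ref{l:P_m}. The only difference is cosmetic: you spell out the local triviality of $G\to G/\Tuno$ as a principal bundle, which the paper leaves implicit.
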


\newcommand{\CartanAlg}{\mathfrak t}

\begin{proof} 
i. Since $g.j(\beta,h)= gh\exp(-\beta\star\eta).\P^{\{S\}}_\beta(\m)=
j(\beta,gh)$,
$$
  \cP_{g.\m} \ug j(\toro^k\per g\Tuno) \,. 
$$
The $X$-invariance of $g.\cP_\m$ follows from the $X$-invariance of
$\cP_\m$ and from the $G$-invariance of $X$: if $(\beta,h)\in
\toro^k\per \Tuno$ then
$\P^X_t(g.j(\beta,h))=g.\P^X_t(j(\beta,h))\in g.\cP_\m$.

ii. Since $\cP_{g.\m} = j(\toro^k\per g\Tuno)$ this follows from the
facts that $j$ is a diffeomorphism of $\toro^k\per G$ onto $\cF$
and that lateral classes in a group either coincide or are disjoint.

iii. This follows from the fact that
the sets $\toro^k\per g\Tuno$ are the fibers of a fibration of
$\toro^k\per G$ with base $G/\Tuno$.
\end{proof}

We now prove that the restriction of the flow of $X$ to each set
$\cP_m$, $m\in\cF$, is conjugate to one and the same linear flow on
$\toro^{k+{d_1}}$, where ${d_1}$ is the dimension of~$\Tuno$. Choose
an integral basis $\xi=\{\xi_1,\ldots,\xi_{d_1}\}$ of the Lie
algebra $\CartanAlg_{1}\subseteq\g$ of $\Tuno$; `integral basis'
means that it generates the lattice of elements that exponentiate to
the unity: for $\zeta\in\CartanAlg_{1}$, $\exp\zeta=e_G$ if and only
if $\zeta=\sum_i z_i\xi_i$ with all $z_i\in\interi$. Then,
$\o\star\eta=\sum_{i=1}^{d_1}\nu_i\xi_i$ or
\begin{equation}\label{omega*eta=nu*xi}
  \o\star\eta \ug \nu\star\xi
\end{equation}
for some $\nu=(\nu_1,\ldots,\nu_{d_1})$. Moreover, for any $g\in G$, the map
$$
  i_g \;:=\; \toro^k \times \toro^{d_1} \;\to\; \cP_{g.\m} \,,
  \qquad i_g(\a,\beta)= j(\a,g\exp(\beta\star \xi)) 
$$
is a diffeomorphism.

\begin{lemma} \label{l:dynamics} For any $g\in G$, $i_g$ conjugates
the linear flow
$$
 (t,(\a,\beta)) \mapsto (\a+t\o, \beta + t \nu ) \qquad
 (\mod 1) 
$$
on $\toro^k\per\toro^{d_1}$ to the restriction of the flow of $X$ to
$\cP_{g.\m}$.
\end{lemma}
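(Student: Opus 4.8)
The plan is to verify directly that $i_g$ intertwines the two flows, i.e. that
$$
  \P^X_t\bigl(i_g(\a,\beta)\bigr)=i_g(\a+t\o,\,\beta+t\nu)
  \qquad\forall\, t,\ (\a,\beta)\in\toro^k\per\toro^\duno \,.
$$
Since $i_g$ is a diffeomorphism onto $\cP_{g.\m}$ (as already noted), this intertwining relation is exactly the asserted conjugacy between the linear flow on $\toro^k\per\toro^\duno$ and the restriction of the flow of $X$ to $\cP_{g.\m}$.

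The computation is short and I would simply unfold $i_g$ and push it through the flow formula \for{flusso-su-j}. Because $i_g(\a,\beta)=j\bigl(\a,g\exp(\beta\ccdot\xi)\bigr)$, applying \for{flusso-su-j} with the group element $g\exp(\beta\ccdot\xi)$ gives
$$
  \P^X_t\bigl(i_g(\a,\beta)\bigr)
  = j\bigl(\a+t\o,\,g\exp(\beta\ccdot\xi)\exp(t\o\ccdot\eta)\bigr) \,.
$$
The crux is then the identity \for{omega*eta=nu*xi}, namely $\o\ccdot\eta=\nu\ccdot\xi$, which holds precisely because $\o\ccdot\eta$ is the generator of the one-parameter group $\Gammauno\subseteq\Tuno$ and therefore lies in $\tuno$, so it decomposes in the chosen integral basis. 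Using it to replace $\exp(t\o\ccdot\eta)$ by $\exp(t\nu\ccdot\xi)$, and invoking the commutativity of the torus $\Tuno$ (so that $\exp$ is a homomorphism on the abelian algebra $\tuno$), I obtain $\exp(\beta\ccdot\xi)\exp(t\nu\ccdot\xi)=\exp\bigl((\beta+t\nu)\ccdot\xi\bigr)$. This collapses the right-hand side to $j\bigl(\a+t\o,\,g\exp((\beta+t\nu)\ccdot\xi)\bigr)=i_g(\a+t\o,\beta+t\nu)$, which is exactly the claim.

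There is essentially no genuine obstacle here, since all the content has been front-loaded into the earlier constructions: the flow formula \for{flusso-su-j} already encodes the dynamics in the $(\a,g)$ coordinates, the relation \for{omega*eta=nu*xi} converts the group shift $\exp(t\o\ccdot\eta)$ into a shift along $\xi$, and the abelianness of $\Tuno$ lets the two exponentials combine additively. The only point I would treat with a line of care is the well-definedness of the linear flow $(\a,\beta)\mapsto(\a+t\o,\beta+t\nu)$ on $\toro^k\per\toro^\duno$: the $\a$-factor descends mod $\interi^k$ because $j$ is already periodic mod $\interi^k$ (Lemma \ref{l:j_m}), while the $\beta$-factor descends mod $\interi^\duno$ precisely because $\xi$ was chosen as an \emph{integral} basis of $\tuno$, so that $\beta\mapsto\exp(\beta\ccdot\xi)$ has kernel exactly $\interi^\duno$ and identifies $\toro^\duno$ with $\Tuno$.
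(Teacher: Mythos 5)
Your proof is correct and follows essentially the same route as the paper's: apply the flow formula \for{flusso-su-j} with group element $g\exp(\beta\ccdot\xi)$, then use the relation $\o\ccdot\eta=\nu\ccdot\xi$ from \for{omega*eta=nu*xi} together with the abelianness of $\tuno$ to combine the exponentials into $\exp((\beta+t\nu)\ccdot\xi)$. Your added remark on the well-definedness of the linear flow on $\toro^k\per\toro^\duno$ (via the integrality of the basis $\xi$) is a correct refinement of a point the paper leaves implicit.
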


\begin{proof}
$
 \P^X_t(i_g(\a,\beta))
 =
 \P^X_t(j(\a,g\exp(\beta\star\xi))
 =
 j(\a+t\o , g\exp(\beta\star\xi+t\o \star\eta))
$. 
This equals $i_g(\a+t\o,\beta+t\nu )$ given that $\o\star\eta =
\nu\star\xi$. 
\end{proof}

This completes the proof of Theorem \ref{th1}, with $d={d_1}$ and $T=T_1$.

\acapo
{\it Remarks: } 1. The external frequencies $\nu$
are related to the internal ones by a linear relation, see
\for{omega*eta=nu*xi}. This might be
seen as related to the linearity of the reconstruction
equation.

2. The choice of the lifts of the generators, of their phases
and obviously of the point $\m$ are not unique. The entire
construction depends on them. Only the invariant tori of $\cF$ of
{\it minimal} dimension, being uniquely defined by the dynamics as
closure of trajectories, are independent of these choices.

3. What we need in the proof of Theorem 1 are the logarithms
$\eta_i$ of the phases $\bar\gamma_i$ and the fact that they
commute. This implies that they belong to some abelian Lie subalgebra
of $\g$, and the torus
$\Tdue$ is used only to define such a subalgebra. The dimension of
$\Tdue$ is immaterial to our construction, and even if it might seem
natural to choose it as small as possible, there is no need to
do that within the proof of Theorem \ref{th1}. The reason is that
we might need to
change anyway this choice in the proof of Theorem \ref{th2}, to remove
resonances between the internal and the external frequencies. 

\subsection{Proof of Theorem 3}
\label{s:proof3}
{\it i. $\implies$ ii.} Under hypothesis i., there exists an
equivariant diffeomorphism $D_1$ between $\cF$ and $\toro^k\times G$.
(`Equivariant' has here the same meaning as in Section \ref{s:3.1}).
By the reducibility of the reconstruction equation there is a
diffeomorphism  $D_2:\cF\to\toro^k\times G$ that conjugates the
vector field $X$ to the system \for{triv2} on $\toro^k\times G$,
namely
\begin{equation}
\label{triv2bis}
   \dot \varphi = \omega \,,\qquad \dot h = h \rho
\end{equation}
with a constant $\rho\in\mathfrak g$. The diffeomorphism $D_2$ is the
composition of $D_1$ and of the diffeomorphism $(\varphi,g)\mapsto
(\varphi,ga(\varphi)^{-1})$ as in \for{diffeo}, and is thus equivariant. System
\for{triv2bis} is invariant under the action $\L$ of $\toro^k\times G$
on itself by left translations,
$$
  \L_{(\a,g)}(\beta,h)=(\a+\beta,gh) \,.
$$
Therefore, $X$ is invariant under the action $\tilde\L$ of
$\toro^k\times G$ on $\cF$ given by
$$
   \tilde\L_{(\a,g)} := D_2^{-1}\circ \L_{(\a,g)}\circ\ D_2
$$
and this action is free because $\L$ is free. By the equivariance of
$D_2$, the restriction
of the action $\tilde\L$ to the subgroup $\{0\}\times G$ coincides
with the action $\Psi$ of $G$ on $\cF$. Hence,
the restriction of $\tilde\L$ to the subgroup
$\toro^k\times\{e_G\}$ gives an
action $\chi$ of $\toro^k$ on $\cF$ that has the properties of condition ii.
of Theorem 3.

{\it ii. $\implies$ iii.}    Let $Y_1,...,Y_k\in\cX(\cF)$ be a set of
infinitesimal generators of the action $\chi$ of $\toro^k$ on $\cF$.
Since $\chi$ commutes with $\Psi$, the vector fields $Y_i$ are
$\pi$-related to vector fields $\widehat Y_i$ in $\widehat{\cF}$
(with, of course, $\pi:\cF\to\cF/G$ the canonical projection). The
$\chi$-invariance of $X$ and the density of its trajectories on $\cF/G$
implies that the vector fields $\widehat Y_i$ have the form
$\sum_{j=1}^kr_{ij}\partial_{\varphi_j}$ with constant $r_{ij}$.
The freeness hypothesis implies that the vector fields $\widehat Y_i$ are
independent. Hence, there exist constant coefficients $s_{ij}$ such
that, for every $i$, $\partial_{\varphi_i} = \sum_{j=1}^ks_{ij}
\widehat Y_j$. The $k-1$ vector fields $S_i = \sum_{j=1}^k
s_{ij} Y_j$, $i=1,...,k-1$, have the properties of condition
iii. 

{\it iii. $\implies$ i.}  Under hypothesis iii., the construction
done in the proof of Theorem 1 is valid. In particular, by Lemma
\ref{l:j_m}, $\cF$ is diffeomorphic  the map
$j:\toro^k\times G \to \cF$ as in \for{j} is an equivariant
diffeomorphism, and hence $\cF$ is a trivial $G$-bundle. Formula
\for{flusso-su-j} shows that $j$ conjugates $X$ to the constant
system
$$
   \dot \varphi = \o \,, \qquad
   \dot g = g\, \o\star\eta
$$
on $\toro^k\times G$. Hence, the reconstruction equation is reducible
(with $\rho=\o\star\eta$).

\acapo
{\it Remark: } In the hypotheses of Theorem 1,  the additional action
$\chi$ of $\toro^k$ is constructed as follows. Consider the free
action $J$ of $\toro^k\times G$ on $\cF$ given by
$$
  J_{(\a,g)} := j\circ \L_{(\a,g)}\circ\ j^{-1}
$$
From
$$
  J_{(\a,g)} (j(\beta,h)) = j(\a+\beta,gh) 
$$
and \for{flusso-su-j} it follows that
$\P^X_t(J_{(\a,g)}(j(\beta,h))) = j(\a+\beta+\o t, g h
\exp(t \o \star \eta)) = J_{(\a,g)}(\P^X_t(j(\beta,h))) $, which
shows that $X$ is $J$-invariant. Moreover, $J|_{\{0\}\times G}=\Psi$
and $\chi:=J|_{\toro^k\times \{e_G\}}$ is an action of $\toro^k$ on
$\cF$ with all the properties of condition ii.

\subsection{Proof of Theorem 2}
\label{s:proof2}

In the proof of Theorem 2 we use the entire construction done
in the proof of Theorem~1. If the frequency vector
$(\o,\nu)$ constructed in that proof
happens to be nonresonant then Theorem \ref{th2} is
valid with ${d_0}={d_1}$, $T=T_1$ and
$r=1$.
It remains to be considered the case in which the vector $(\o,\nu)$
is resonant.

A resonance of $(\o,\nu)\in\reali^{k+{d_1}}$ is a nonzero integer vector
$(p,q)\in\interi^{k+{d_1}}$ such that \hbox{$p\cdot\o+q\cdot\nu=0$}.
Resonances of $(\o,\nu)$ form a lattice $\Lambda$ of
$\interi^{k+{d_1}}$ of rank $l\ge1$. We recall that a lattice of
$\interi^m$ of rank $l$ is the set of all linear combinations with
integer coefficients of $l$ linearly independent vectors of
$\interi^m$, called a basis of the lattice. In our case, let
$$
  (\tilde p^1,\tilde q^1),\ldots,(\tilde p^l,\tilde q^l) 
  \in\interi^{k+{d_1}} 
$$
be a basis of the lattice $\Lambda$.

Remember that the vector $\omega\in\reali^k$ is nonresonant by
assumption. The density of the set 
$\{\exp({t\omega \star \eta}) = \exp(t\nu \star \xi):\, t\in\reali\}$ 
in the ${d_1}$-dimensional torus $\Tuno$ implies that the vector
$\nu\in\reali^{d_1}$ is nonresonant as well. Thus, each
of the two groups of vectors $\tilde p^1,\ldots,\tilde p^l\in\interi^k$ and 
$\tilde q^1,\ldots,\tilde q^l\in\interi^{d_1}$ forming the basis of
$\Lambda$ is
independent over $\interi$. (For, if $\sum_i c_i\tilde p^i=0$ with
$c_1,\ldots,c_l\in\interi$, then $0=\sum_i c_i(\tilde p^i\cdot \o +
\tilde q^i\cdot \nu) = \sum_i c_i\tilde q^i\cdot \nu$ and hence
$\sum_i c_i\tilde q^i=0$; but then
$\sum_i c_i(\tilde q^i,\tilde p^i)=0$ and all $c_i=0$).
From this it follows that $l\le\min(k,{d_1})$. Moreover, it is
possible to put the resonances in a simpler form:

\begin{lemma}\label{l:SNF} There is an integral basis\footnote{The
notion of integral basis is defined just after Lemma 3.}
$\xi'=\{\xi_1',\ldots,\xi'_{d_1}\}$ of $\tuno$ such that, if $\nu'\in
\reali^{{d_1}}$ denotes the components of $\nu\star\xi\in\tuno$
in this basis (that is, $\nu\star\xi=\nu'\star\xi'$), then the
resonant lattice $\L'\subset\interi^{k+{d_1}}$
of $(\o,\nu')\in\reali^{k+{d_1}}$ has a basis formed by $l$ vectors
\begin{equation}\label{NuovaBase}
 (p^1,r_1e^1)\,,\, \ldots \,,\,  (p^l,r_le^l) \in \interi^{k+{d_1}}
\end{equation}
where $p^1,\ldots,p^l \in \interi^k$, 
$e^i$ denotes the $i$-th vector of the standard basis of
$\interi^{d_1}$, and the $r_i$'s are positive integers with the
property that if $r_i<r_j$ then $r_i$ divides $r_j$.
\end{lemma}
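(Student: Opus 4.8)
The plan is to recognize this statement as an instance of the Smith Normal Form (equivalently, the elementary divisor theorem) for integer matrices, once a change of integral basis of $\tuno$ is translated into an admissible operation on the resonant lattice. First I would record how such a change acts on $\nu$ and on the resonances. Let $M\in\mathrm{GL}(\duno,\interi)$ and set $\xi_i'=\sum_j M_{ji}\xi_j$; since any two integral bases of $\tuno$ differ by a unimodular matrix, letting $M$ range over $\mathrm{GL}(\duno,\interi)$ produces exactly all candidate bases $\xi'$. Expressing the fixed element $\nu\ccdot\xi\in\tuno$ in the two bases gives $\nu=M\nu'$, i.e. $\nu'=M^{-1}\nu$. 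A direct substitution then shows that $(p,q)$ is a resonance of $(\o,\nu)$ if and only if $(p,M^Tq)$ is a resonance of $(\o,\nu')$, since $p\cdot\o+(M^Tq)\cdot\nu'=p\cdot\o+q\cdot\nu$. Hence the unimodular map $(p,q)\mapsto(p,M^Tq)$ carries the resonant lattice $\L$ of $(\o,\nu)$ isomorphically onto the resonant lattice $\L'$ of $(\o,\nu')$.

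Next I would arrange the given basis of $\L$ into matrices: let $P$ be the $k\times l$ integer matrix with columns $\tilde p^1,\ldots,\tilde p^l$ and $Q$ the $\duno\times l$ integer matrix with columns $\tilde q^1,\ldots,\tilde q^l$. Because the $\tilde q^i$ are independent over $\interi$ (established just before the lemma), $Q$ has full column rank $l$. The two degrees of freedom still available are exactly the row and column operations of a Smith reduction: the choice of integral basis $\xi'$ acts on the $\duno$ rows of $Q$ by left multiplication by $M^T$, and the choice of a different $\interi$-basis of $\L'$ acts on the $l$ columns by right multiplication by some $U\in\mathrm{GL}(l,\interi)$. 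Applying the Smith Normal Form theorem to $Q$, I obtain $V\in\mathrm{GL}(\duno,\interi)$ and $U\in\mathrm{GL}(l,\interi)$ with $VQU=\left(\begin{smallmatrix}D\\0\end{smallmatrix}\right)$, where $D=\mathrm{diag}(r_1,\ldots,r_l)$ and $r_1\mid r_2\mid\cdots\mid r_l$ are positive integers (positive precisely because $Q$ has full column rank).

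I would then close the argument by taking $M^T=V$, so that $\xi'$ is the integral basis with $\xi_i'=\sum_j V_{ij}\xi_j$ (integral since $V$ is unimodular) and $\nu'=V^{-T}\nu$, and by recombining the basis $(\tilde p^i,V\tilde q^i)$ of $\L'$ through $U$. The second block of the recombined basis is the $i$-th column of $VQU$, namely $r_ie^i$, while the first block is the $i$-th column $p^i$ of $PU$; since $U$ is unimodular these vectors still form a basis of $\L'$, giving precisely the vectors \for{NuovaBase}. Finally, the chain $r_1\mid\cdots\mid r_l$ yields the stated property: if $r_i<r_j$ then necessarily $i<j$, for $i\ge j$ would force $r_j\mid r_i$ and hence $r_j\le r_i$; and then $r_i\mid r_j$ follows.

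I expect the only genuine obstacle to be the bookkeeping of the first paragraph — identifying correctly that the freedom in the integral basis of $\tuno$ induces the transpose action $q\mapsto M^Tq$ on the resonances, rather than $q\mapsto Mq$ or $q\mapsto M^{-1}q$. Once this dictionary between the two basis changes and the two sides of a Smith reduction is in place, the entire content of the lemma is contained in that classical theorem, with the divisibility and positivity of the $r_i$ supplied automatically by the full column rank of $Q$.
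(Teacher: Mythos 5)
Your proposal is correct and follows essentially the same route as the paper's proof: apply the Smith Normal Form to the matrix $Q$ of the $\tilde q^i$ (full column rank giving positive invariant factors), use the unimodular row transformation (your $V$, the paper's $Z$) to define the new integral basis $\xi'$ with $\nu'=Z^{-T}\nu$, and use the column transformation (your $U$, the paper's $C^T$) to recombine the lattice basis into the vectors \for{NuovaBase}. Your careful bookkeeping of the transpose action $(p,q)\mapsto(p,M^Tq)$ on the resonant lattice matches the paper's computation $(\o,\nu')\cdot(p,q)=\o\cdot p+\nu\cdot Z^{-1}q$ exactly.
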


\begin{proof}
Let $Q$ be the ${d_1}\times l$ integer matrix with columns $\tilde
q^1,\ldots,\tilde q^l$. By the Smith Normal Form Theorem (see e.g.
\cite{cohn}), there exist a $d_1\times d_1$ integer matrix $Z$ and
an $l\times l$ integer matrix $C$, both invertible over $\interi$
(that is, unimodular, or having determinant $\pm1$), such that the
matrix  $Z Q C^T$ has the block structure 
\begin{equation}\label{SmithNF}
  Z Q C^T 
  = 
 \begin{pmatrix} \mathrm{diag}(r_1,...,r_l)
                 \\
                 O_{{d_1}-l,l}
 \end{pmatrix} 
\end{equation}
where $O_{{d_1}-l,l}$ is the $({d_1}-l)\times l$ null block and
$r_1,\ldots,r_l$ are nonnegative
integers such that if $0\not=r_i<r_j$ then $r_i$
divides $r_j$. Since $Q$ has rank $l$, all $r_j\not=0$. 
Let $Z_{ij}\in\interi$ denote the entries of $Z$. Since $Z$ is
unimodular, the vectors
$$
  \xi'_i = \sum_{j=1}^{d_1} Z_{ij}\xi_j \,,\qquad i=1,\ldots,{{d_1}} \,,
$$
form a new integral basis $\xi'$ of $\tuno$ and
\begin{equation}\label{nuxi}
  \nu\star\xi = \nu'\star\xi' \qquad\mathrm{with} \ \nu'=Z^{-T}\nu
  \,.
\end{equation}
Since $(\o,\nu')\cdot(p,q) = \o\cdot p+\nu\cdot Z^{-1}q$, the resonant
lattice $\L'$ of $(\o,\nu')\in\reali^{k+{d_1}}$ has a basis formed by
the $l$ vectors $(\tilde p^i,Z\tilde q^i)$, $i=1,\ldots,l$. 
Let now $C_{ij}\in\interi$ be the entries of $C$. Since $C$ is unimodular,
another basis of $\L'$ is formed by the $l$ vectors 
$$
  \sum_{j=1}^lC_{ij}(\tilde p^j,Z\tilde q^j)
  = 
  \Big(\sum_{j=1}^lC_{ij}\tilde p^j, \sum_{j=1}^l C_{ij}Z\tilde q^j\Big)
  \,,\qquad i=1,\ldots, l  \,.
$$ 
This is the basis \for{NuovaBase}, with 
$p^i=\sum_{j=1}^lC_{ij}\tilde p^j$ for $i=1,\ldots,l$, because
the first $l$ rows of (\ref{SmithNF}) read
$\sum_{j=1}^l C_{ij} Z\tilde q^j = r_ie^i$, $i=1,\ldots, l$. 
\end{proof}

Note that the resonance relations satisfied by the new frequency vector
$(\o,\nu')\in\reali^{k+{d_1}}$ that correspond to the vectors of the
basis (\ref{NuovaBase}) are
\begin{equation}\label{resonances}
  p^i\cdot \o + r_i\nu'_i =0 \,,\qquad i=1,\ldots,l \,.
\end{equation}
Moreover, Lemma \ref{l:SNF} gives a decomposition of the Lie algebra
$\tuno$ as a sum of two subalgebras, $\tuno=\tres\oplus\tzero$, with 
$\tres=\mathrm{Span}(\xi'_{1},\ldots,\xi'_l)$ of dimension $l$ and
$\tzero=\mathrm{Span}(\xi'_{l+1},\ldots,\xi'_{d_1})$ of dimension
${d_0}={d_1}-l$. This decomposition is such that if
$\nu''\in\reali^{d_0}$
denotes the vector of the
components of the
frequency vector $\nu'\star\xi'$ in $\tzero$, that is
$\nu''=(\nu'_{l+1},\ldots,\nu'_{{d_1}})$, then the
vector $(\o,\nu'')\in\reali^{k+{d_0}}$ is nonresonant.

We now proceed as in the proof of Theorem \ref{th1}, but instead of the 
lifts $S_1,\ldots,S_k$ considered there we consider the lifts
$$
  S'_i = r S_i \,,\qquad i=1,\ldots,k \,,
$$
where $r=\max(r_1,\ldots,r_l)$ (see Lemma \ref{l:SNF}). Thus
$\P^{S'_i}_1(\m) = \P^{S_i}_{r}(\m) = \bar\gamma_i^r.\m$. 
Correspondingly, we define
$$
  \o'=\frac\o r 
$$
so that $X = \sum_{j=1}^k \o_j'S'_j$ and $\P^X_t =
\P^{\{S'\}}_{t\o'}$.
A possible choice of logarithms of the powers $\bar{\gamma}_i^r$ of 
the phases would be $r\eta_i$, but we choose instead
\begin{equation}\label{eta'}
  \eta'_i = r \eta_i + \d_i
  \quad \mathrm{with} \quad  
  \d_i = \sum_{j=1}^l \frac r{r_j} p_i^j \xi'_j \,, \qquad
  i=1,\ldots,k \,;
\end{equation}
we will write $\d=(\d_1,\ldots,\d_k)$. The need of correcting the
choice of logarithms with the elements $\delta_i$ will become clear
in the lemma immediately below: these elements of the Lie algebra are
precisely the corrections needed to make the map $j'$ a covering map.
Note that $\d_i\in\tres$ and $\exp\d_i=e_G$ (so that $\exp\eta'_i =
\bar\gamma_i^r$) because the $\d_i$ and $\eta_i$ commute and the
$p^j_i$ and $r/r_i$ are integers.

\begin{lemma}\label{l:j'_m} The map
\[
    j':\mathbb T^k \times G \to \cF \,,\qquad
    j'(\alpha,g) = 
    g \exp({-\alpha\star\eta'}). \Phi^{\{S'\}}_{\alpha}(\m) \,,
\]
is a smooth $r^k:1$ covering map.
\end{lemma}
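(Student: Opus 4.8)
The plan is to bypass a direct verification by exhibiting $j'$ as the composition of the diffeomorphism $j$ of Lemma~\ref{l:j_m} with an explicit $r^k:1$ self-covering of $\toro^k\times G$; the covering property of $j'$ then follows formally.

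First I would rewrite $j'$ using the data of Theorem~1. Since $S'_i=rS_i$, the flows satisfy $\P^{\{S'\}}_\a=\P^{\{S\}}_{r\a}$. From $\eta'_i=r\eta_i+\d_i$ one gets $\a\star\eta'=(r\a)\star\eta+\a\star\d$. All the exponents involved lie in the common abelian subalgebra $\tdue$ (the $\eta_i$ by their construction in Theorem~1, and $\d_i\in\tres\subseteq\tuno\subseteq\tdue$), so the exponential of the sum factors into commuting factors,
$$
  \exp(-\a\star\eta')=\exp(-(r\a)\star\eta)\exp(-\a\star\d)
  =\exp(-\a\star\d)\exp(-(r\a)\star\eta).
$$
Substituting this into the definition of $j'$ and comparing with the definition of $j$, I expect the clean factorization
$$
  j'(\a,g)=j\big(r\a,\,g\exp(-\a\star\d)\big)=(j\circ\Psi)(\a,g),
  \qquad \Psi(\a,g):=\big(r\a,\,g\exp(-\a\star\d)\big),
$$
where $r\a$ is read modulo $1$.

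Next I would verify that $\Psi$ is a well-defined smooth $r^k:1$ covering of $\toro^k\times G$. Well-definedness on $\toro^k$ rests on $\exp\d_i=e_G$: for $z\in\interi^k$ we have $\exp(-z\star\d)=\prod_i(\exp\d_i)^{-z_i}=e_G$, so $\a\mapsto g\exp(-\a\star\d)$ descends to $\toro^k$, and of course $r\a$ is well defined there. To identify $\Psi$ as a covering I would factor it as $\Psi=(m_r\times\mathrm{id}_G)\circ\Theta$, where $m_r:\toro^k\to\toro^k$, $m_r(\a)=r\a$, is the multiplication-by-$r$ endomorphism, an $r^k:1$ covering since it is $r:1$ on each circle factor, and $\Theta(\a,g)=(\a,g\exp(-\a\star\d))$ is a bundle automorphism over the identity of $\toro^k$, hence a diffeomorphism with inverse $(\a,g)\mapsto(\a,g\exp(\a\star\d))$. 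A covering composed with a diffeomorphism is again a covering of the same degree, so $\Psi$ is $r^k:1$. Since $j$ is a diffeomorphism, $j'=j\circ\Psi$ is then an $r^k:1$ covering map, as claimed.

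I expect the only delicate point to be the factorization of $\exp(-\a\star\eta')$, that is, the bookkeeping that places $\eta_i$ and $\d_i$ in one abelian algebra so that the exponentials both factor and commute; after that, the covering conclusion is purely formal. As a cross-check one can count fibers directly: injectivity of $j$ turns $j'(\a,g)=j'(\b,h)$ into $\Psi(\a,g)=\Psi(\b,h)$, i.e.\ $r(\a-\b)\in\interi^k$ together with $g=h\exp(-\b\star\d)\exp(\a\star\d)$; the first relation has exactly $r^k$ solutions $\a\bmod1$, each determining $g$ uniquely, which recovers the degree $r^k$.
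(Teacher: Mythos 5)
Your proof is correct, and it takes a genuinely different route from the paper's. The key identity $j'(\a,g)=j\bigl(r\a,\,g\exp(-\a\ccdot\d)\bigr)$ is valid: $\P^{\{S'\}}_\a=\P^{\{S\}}_{r\a}$ since $S'_i=rS_i$, and because the $\eta_i$ lie in $\tdue$ while $\d_i\in\tres\subseteq\tuno\subseteq\tdue$, all exponents sit in one abelian subalgebra, so $\exp(-\a\ccdot\eta')=\exp(-\a\ccdot\d)\exp(-(r\a)\ccdot\eta)$ and the factorization through the self-map $(\a,g)\mapsto(r\a,\,g\exp(-\a\ccdot\d))$ of $\toro^k\times G$ goes through; its well-definedness rests precisely on $\exp\d_i=e_G$, which the paper verifies right after \for{eta'}. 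The paper argues differently: it re-runs the arguments of Lemma \ref{l:j_m} to get that $j'$ is well defined, surjective and a local diffeomorphism, and then computes the fibers from scratch, using that the $S'_i$ have lifted period $1/r$ and expressing $\P^{\{S'\}}_{\b-\a}(\m)$ through the phases $\bar\gamma_1^{u_1}\cdots\bar\gamma_k^{u_k}$; this yields $j'(\a,g)=j'(\b,h)$ iff $\b\equiv\a+\frac ur$ and $h=g\exp(\frac1r u\ccdot\d)$ for some $u\in\interi_r^k$, i.e.\ \for{DeckTransformations}, hence constant fiber cardinality $r^k$, and the covering property is then inferred from the criterion ``local diffeomorphism with constant finite fibers is a covering''. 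Your decomposition buys a cleaner logical structure: smoothness, surjectivity, the covering property and the degree all come for free, since $m_r\times\mathrm{id}_G$ is manifestly an $r^k:1$ covering, $\Theta$ and $j$ are diffeomorphisms, and no fiber-counting criterion is needed; moreover your cross-check recovers exactly \for{DeckTransformations} (from $g\exp(-\a\ccdot\d)=h\exp(-\b\ccdot\d)$ one gets $h=g\exp((\b-\a)\ccdot\d)=g\exp(\frac1r u\ccdot\d)$, using $\exp(z\ccdot\d)=e_G$ for $z\in\interi^k$), so the deck-transformation action $\Psi_u$ needed for Lemma \ref{l:diffeo} and the subsequent analysis of the sets $\cP'_{g.\m}$ is available unchanged. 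The paper's direct computation, in exchange, works straight from the definitions of the lifted periods and phases and so makes the group-theoretic origin of the deck group explicit without appealing to the factorization, but nothing in the later proof is lost by your route, since it uses only objects already established in the proof of Theorem \ref{th1}.
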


\begin{proof} The same arguments used in the proof of Lemma \ref{l:j_m}
show that $j'$ is well defined, is surjective
and is a local diffeomorphism at each
point. Because of the latter property, if the cardinality of its
fibers is constant then it is a
covering map. Assume $j'(\alpha,g)=j'(\beta,h)$, or
\begin{equation}\label{inj'}
  \P^{\{S'\}}_{\beta-\a}(\m) 
  = 
  \exp(\beta\star\eta')h^{-1}g\exp(-\a\star\eta').\m \,.
\end{equation}
This implies $\P^{\{S'\}}_{\beta-\a}(\m)\in G.\m$. Since the
$S'_i$ have lifted period $1/r$, this happens
if and only if $\beta\equiv\a$ ($\mod\frac1{r}$) or
$$
  \beta \equiv \a + \frac1 r\, {u} \ (\mod 1)
$$
with some $u=(u_1,\ldots,u_k) \in \interi_{r}^k$, where
$\interi_r=\interi/r\interi \equiv \{0,\ldots,r-1\}$,
and in this case
$
 \P^{S'_i}_{\beta_i-\a_i}(\m) 
 = 
 \P^{S_i}_{u_i}(\m) 
 =
 \bar\gamma_i^{u_i}.\m \,.
$
So, equation \for{inj'} implies
$$
   \P^{\{S'\}}_{\beta-\a}(\m) = 
   \bar\gamma_1^{u_1} \cdots \bar\gamma_k^{u_k}.\m
   \qquad\mathrm{for\ some}\quad  u \in\interi_{r}^k \,.
$$
Thus, if equation (\ref{inj'}) is satisfied, there is
$u\in\interi_{r}^k$ 
such that $\bar\gamma_1^{u_1} \cdots \bar\gamma_k^{u_k}.\m
= \exp(\beta\star\eta')h^{-1}g\exp(-\a\star\eta').\m$, or
$$
 h^{-1}g 
  = \bar\gamma_1^{u_1} \cdots \bar\gamma_k^{u_k}
    \exp\Big(-\frac{u_1}{r}\eta'_1 - \ldots
             -\frac{u_k}{r}\eta'_k\Big) 
  = \exp\Big(- \frac1r u\star \d\Big) \,,
$$
with $u\star\d= \sum_{i=1}^ku_i\delta_i$.
In conclusion, $j'(\alpha,g)=j'(\beta,h)$ if and only if there exists
$u\in\interi_{r}^k$ such that
\begin{equation}\label{DeckTransformations}
  \beta = \a+\frac ur   \ (\mathrm{mod}1)
  \,,\qquad
  h = g 
  \exp\Big(\frac1r u\star \d \Big) \,.
\end{equation}
Thus, the cardinality of the fibers of $j'$ is $r^k$.
\end{proof}

By (\ref{DeckTransformations}), the deck transformations of the
covering $j'$ of $\cF$ give an action
$\Psi$ of $\interi_{r}^k$ on $\toro^k\times G$ defined by
$$
  \Psi_u(\a,g)
  =
  \Big( \a+\frac ur \,,\;
  g\exp\Big(\frac1r u\star\d \Big)
  \Big)
$$
that is free and satisfies $j'\circ\Psi_u=j'$ for all $u \in \mathbb
Z^k_r$ (we recall that the deck transformations are the maps of a
covering onto itself that preserve the fibers \cite{1991.Massey}).
Therefore:

\begin{lemma}\label{l:diffeo} $\cF$ is diffeomorphic to 
$(\toro^k\times G)/\interi_{r}^k$ (the quotient being relative to the
action $\Psi$).
\end{lemma}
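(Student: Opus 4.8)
The plan is to realize $\cF$ as the base of the regular covering $j'$ whose group of deck transformations is exactly $\interi_{r}^k$ acting through $\Psi$. First I would record the two structural facts about $\Psi$: it is a smooth action of the \emph{finite} group $\interi_{r}^k$ on the (compact) manifold $\toro^k\times G$, and it is free, as stated right after \for{DeckTransformations}. Since a free smooth action of a finite group by diffeomorphisms is automatically properly discontinuous, the quotient $Q:=(\toro^k\times G)/\interi_{r}^k$ is a smooth manifold and the canonical projection $q:\toro^k\times G\to Q$ is a smooth (regular) covering map; in particular $q$ is a surjective local diffeomorphism whose fibers are precisely the $\Psi$-orbits.

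Next, because $j'\circ\Psi_u=j'$ for every $u\in\interi_{r}^k$, the map $j'$ is constant on the $\Psi$-orbits, hence on the fibers of $q$. It therefore descends to a unique continuous map $\bar{j'}:Q\to\cF$ with $j'=\bar{j'}\circ q$, and $\bar{j'}$ is smooth: locally $q$ admits a smooth inverse (being a local diffeomorphism), so $\bar{j'}=j'\circ q^{-1}$ is smooth there. It then remains to verify that $\bar{j'}$ is a bijective local diffeomorphism, and hence a diffeomorphism. Injectivity is the crucial point, and it is exactly the content of Lemma \ref{l:j'_m}: by \for{DeckTransformations}, $j'(\a,g)=j'(\beta,h)$ holds if and only if $(\beta,h)=\Psi_u(\a,g)$ for some $u$, i.e. if and only if $(\a,g)$ and $(\beta,h)$ lie in the same fiber of $q$, so distinct points of $Q$ have distinct images. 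Surjectivity of $\bar{j'}$ is inherited from the surjectivity of $j'$, also established in Lemma \ref{l:j'_m}. Finally, writing $\bar{j'}=j'\circ q^{-1}$ locally as above exhibits $\bar{j'}$ as a local diffeomorphism, since both $j'$ and $q$ are; a bijective local diffeomorphism is a diffeomorphism, which proves the claim.

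The only step requiring genuine care is the assertion that $Q$ is a smooth manifold with $q$ a smooth covering, so that one is entitled to push $j'$ down to the quotient in the smooth category. This is, however, the standard fact that a free action of a finite group is properly discontinuous, so I anticipate no real obstacle: the argument is essentially a repackaging of Lemma \ref{l:j'_m} into the language of the quotient by the deck group $\interi_{r}^k$, with \for{DeckTransformations} supplying both the freeness of $\Psi$ and the identification of the fibers of $j'$ with the $\Psi$-orbits.
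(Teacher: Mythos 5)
Your proposal is correct and takes essentially the same approach as the paper, which states the lemma as an immediate consequence of Lemma \ref{l:j'_m} and the identity $j'\circ\Psi_u=j'$: by \for{DeckTransformations} the fibers of the covering $j'$ are exactly the orbits of the free $\Psi$-action of $\interi_r^k$, so $j'$ descends to a diffeomorphism from the quotient. Your write-up simply makes explicit the standard details the paper leaves implicit (proper discontinuity of the free finite action, smoothness of the induced map, and the fact that a bijective local diffeomorphism is a diffeomorphism), all of which are handled correctly.
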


Now
\begin{equation}\label{Flusso-e-j'}
  \Phi_t^X(j'(\alpha,g))
  \ug
  \Phi^{\{S'\}}_{t\o'} 
      \Big(g \exp({-\alpha\star\eta'}).\Phi^{\{S'\}}_{\a}(\m) \Big)
  \ug
  j'\big(\a+\o' t \,,\; g\exp(t \o' \star \eta')\big)
\end{equation}
and instead of the subgroup $\Gamma_1$ of the proof of Theorem
\ref{th1} we are lead to consider the subgroup
$$
   \Gamma_0:=\big\{\exp(t\o'\star\eta'):\, t\in\reali\big\} \,.
$$
By \for{resonances} and \for{eta'}, $\sum_{i=1}^k\o'_i\d_i =
\sum_{j=1}^l \frac{\o\cdot p^j}{r_j}\xi'_j = - \sum_{j=1}^l \nu'_j
\xi'_j$ and hence, using \for{omega*eta=nu*xi} and \for{nuxi},
$$
  \o'\star\eta' 
  \ug  \o\star \eta + \sum_{i=1}^k \o'_i\delta_i 
  \ug  \nu'\star \xi' - \sum_{j=1}^l \nu'_j \xi'_j
  \ug \sum_{j=l+1}^{d_1} \nu'_j\xi'_j  \,.
$$
Therefore, the closure of $\Gamma_0$ is the ${d_0}$-dimensional
torus $\Tzero=\exp(\tzero)$. For shortness, we write
$$
  \xi''_i=\xi'_{l+i} \,,\qquad 
  \nu''_i=\nu'_{l+i} \,,\qquad
  i=1,\ldots,{d_0} \,, 
$$
and $\xi''=(\xi''_1,\ldots,\xi''_{d_0})$, so that $\o'\star\eta' =
\nu''\star\xi''$.  Thus, \for{Flusso-e-j'} becomes
\begin{equation}\label{danotare}
  \Phi_t^X(j'(\alpha,g))  =
  j'\big(\a+\o't, g\exp(t \nu''\star\xi'')\big)  
\end{equation}
and the sets
$$
  \cP'_{g.\m} := j'(\toro^k\times g\Tzero)  \,,\qquad g\in G \,,
$$
are invariant under the flow of $X$ and foliate $\cF$. Consider the
subgroup $K$ of  $\interi_r^k$ defined as
\begin{equation}\label{K}
  K =\Big\{u\in\interi^k_r \,:\;
     \frac{u\cdot p^j}{r_j} \in\interi 
     \ \mathrm{for\ all\ } j=1,\ldots, l\Big\} \,.
\end{equation}

\begin{lemma} For each $g\in G$, $\cP'_{g.\m}$ is diffeomorphic to
$(\toro^k\times g\Tzero)/K$ (the quotient being relative to the
action $\Psi$).
\end{lemma}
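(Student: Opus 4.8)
The plan is to view $\cP'_{g.\m}$ as the image under the covering map $j'$ (Lemma \ref{l:j'_m}) of the compact submanifold $\toro^k\times g\Tzero\subset\toro^k\times G$, and to show that the only identifications $j'$ makes among points of this submanifold are those produced by the elements of $K$. Granting this, the free action $\Psi$ restricts to a free action of $K$ on $\toro^k\times g\Tzero$; since $j'\circ\Psi_u=j'$, the restriction $j'|_{\toro^k\times g\Tzero}$ factors through an injective immersion of $(\toro^k\times g\Tzero)/K$ into $\cF$, which---being defined on a compact manifold---is an embedding onto $\cP'_{g.\m}$, exactly the asserted diffeomorphism. So the whole matter reduces to determining, for $(\a,h)\in\toro^k\times g\Tzero$, those $u\in\interi_r^k$ for which $\Psi_u(\a,h)$ again lies in $\toro^k\times g\Tzero$.

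First I would write $h=gt$ with $t\in\Tzero$ and read off from the definition of $\Psi$ that the second component of $\Psi_u(\a,h)$ is $gt\exp(\tfrac1r u\ccdot\d)$; this lies in $g\Tzero$ exactly when $\exp(\tfrac1r u\ccdot\d)\in\Tzero$. Using the explicit form \for{eta'} of the $\d_i$ one computes
\begin{equation*}
  \tfrac1r\,u\ccdot\d \ug \sum_{j=1}^l\frac{u\cdot p^j}{r_j}\,\xi'_j \,,
\end{equation*}
which is a vector of $\tres$; hence $\exp(\tfrac1r u\ccdot\d)$ always belongs to the subtorus $\exp(\tres)$ of $\Tuno$.

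The decisive step is the identity $\exp(\tres)\cap\Tzero=\{e_G\}$. This holds because $\xi'=\{\xi'_1,\ldots,\xi'_\duno\}$ is an \emph{integral} basis of $\tuno$ adapted, by Lemma \ref{l:SNF}, to the splitting $\tuno=\tres\oplus\tzero$: an element lying in both subtori is the exponential of a vector of $\tres$ and of a vector of $\tzero$ whose difference lies in the integral lattice $\bigoplus_i\interi\xi'_i$, and since the $\xi'_i$ are linearly independent this forces both vectors into the lattice, so the element is $e_G$. Consequently $\exp(\tfrac1r u\ccdot\d)\in\Tzero$ if and only if $\exp(\tfrac1r u\ccdot\d)=e_G$, which---again because $\xi'$ is an integral basis---means that each coefficient $\tfrac{u\cdot p^j}{r_j}$ is an integer, i.e.\ $u\in K$.

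It remains only to assemble these observations. For $u\in K$ we have $\exp(\tfrac1r u\ccdot\d)=e_G$, so $\Psi_u$ restricts on $\toro^k\times g\Tzero$ to $(\a,h)\mapsto(\a+\tfrac ur,h)$, the restriction of the free action $\Psi$, and it preserves the submanifold; for $u\notin K$ the computation above shows that $\Psi_u$ pushes the second component out of $g\Tzero$, so it makes no identification within $\toro^k\times g\Tzero$. Thus the fibre of $j'|_{\toro^k\times g\Tzero}$ through each point is precisely its $K$-orbit, and the embedding of the first paragraph yields $\cP'_{g.\m}\cong(\toro^k\times g\Tzero)/K$. The only genuinely nontrivial point is the intersection computation $\exp(\tres)\cap\Tzero=\{e_G\}$; everything else is a direct transcription of the formula for $\Psi_u$ and of the definition of $K$.
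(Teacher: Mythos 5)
Your proposal is correct and takes essentially the same route as the paper: both arguments reduce the lemma to identifying the subgroup of deck transformations of $j'$ that preserve $\toro^k\times g\Tzero$, and both show it equals $K$ via the same computation $\tfrac1r\,u\ccdot\d=\sum_{j=1}^l\tfrac{u\cdot p^j}{r_j}\,\xi'_j$ together with the integrality of the basis $\xi'$. Your only additions are to spell out explicitly the fact $\exp(\tres)\cap\Tzero=\{e_G\}$, which the paper uses implicitly in asserting that $\Psi_u(\a,h)\in\toro^k\times\Tzero$ if and only if $\exp(\tfrac1r\,u\ccdot\d)=e_G$, and to rederive the quotient diffeomorphism directly from the covering property of $j'$ (with a compactness argument) rather than citing Lemma \ref{l:diffeo} as the paper does.
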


\begin{proof} Clearly, it suffices to consider $g=e_G$. It follows
from Lemma \ref{l:diffeo} that $\cP'_{\m}$ is diffeomorphic to the
quotient of $\toro^k\times \Tzero$ by the subgroup of the deck 
transformations that map $\toro^k\times \Tzero$ to itself.
Thus, we only have to show that this
subgroup is $K$. Assume $(\a,h)\in \toro^k\times\Tzero$.
Since $\d_i\in\tres$ and $\Tzero=\exp(\tzero)$,
$\Psi_u(\a,h) = (\a+\frac ur ,h\exp\frac{u\star\d}r)$ is in 
$\toro^k\times\Tzero$ if and only if $\exp\frac{u\star\d}r=e_G$.
Since
$\frac{u\star\d}r =\sum_{i=1}^k \frac{u_i\d_i}r = 
\sum_{j=1}^l\frac{u\cdot p^j}{r_j}\xi'_j$
and the $\xi'_j$ are integral Lie algebra elements, this happens if
and only if  $\frac{u\cdot p^j}{r_j}\in\interi$ for all
$j$, or $u\in K$.
\end{proof}

Thus, the restriction of $j'$ to $\toro^k\times g\Tzero$ is a covering of
$\cP'_{g.\m}$ with fibers
$$
  \Psi_K(\a,h) = \{(\a+ r^{-1} u , h) \,:\; u\in K \} 
  \,,\qquad (\a,h)\in \toro^k\times g\Tzero \,.
$$
(Note however that the preimage under $j'$ of 
$\cP'_{g.\m}$ is $\toro^k\times
g\Tzero F_0$, where 
$
  F_0 = \{ \exp(r^{-1}u\star\d) \,:\; u\in\interi_r^k\} 
$).
We thus introduce coordinates on
these covering tori, with the map
$$
  i'_g : \toro^k\times\toro^{d_0}  \to \cP_{g.\m}' \,, 
  \qquad i'_g(\a,\beta) = j'\big(\a,g\exp (\beta\star\xi'')\big) 
$$
which clearly is still a covering map, with fibers diffeomorphic to
those of $j'$ (see below). From \for{danotare} it then follows that

\begin{lemma} For each $g\in G$, $i'_g: \toro^k \times \toro^{d_0}\to
\cP'_{g.\bar m}$ relates
the linear flow on $\mathbb T^k \times \mathbb T^{d_0}$  with
frequencies $(\omega',\nu'')$ to the flow of $X$ on $\cP'_{g.\m}$.
\end{lemma}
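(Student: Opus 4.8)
The plan is to verify directly the intertwining relation $\Phi^X_t\circ i'_g = i'_g\circ L_t$ for all $t\in\reali$, where $L_t(\a,\beta)=(\a+\o't,\beta+t\nu'')$ is the time-$t$ map of the claimed linear flow on $\toro^k\times\toro^{\dzero}$. Since $i'_g$ is a covering map and not a diffeomorphism, this intertwining is exactly what it means for $i'_g$ to ``relate'' the linear flow to the flow of $X$. The computation parallels that of Lemma \ref{l:dynamics} in the proof of Theorem \ref{th1}, with formula \for{danotare} playing the role there played by \for{flusso-su-j}.

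First I would start from the definition $i'_g(\a,\beta)=j'\big(\a,g\exp(\beta\ccdot\xi'')\big)$ and apply $\Phi^X_t$. Treating $g\exp(\beta\ccdot\xi'')$ as the second argument of $j'$, the flow formula \for{danotare} yields
\[
  \Phi^X_t\big(i'_g(\a,\beta)\big)
  = j'\Big(\a+\o't,\; g\exp(\beta\ccdot\xi'')\exp(t\nu''\ccdot\xi'')\Big).
\]
The only algebraic input then needed is that $\xi''$ is a basis of the abelian subalgebra $\tzero$, so that $\exp(\beta\ccdot\xi'')$ and $\exp(t\nu''\ccdot\xi'')$ both lie in the torus $\Tzero=\exp(\tzero)$ and therefore commute and multiply by adding exponents:
\[
  \exp(\beta\ccdot\xi'')\exp(t\nu''\ccdot\xi'') = \exp\big((\beta+t\nu'')\ccdot\xi''\big).
\]
Substituting this back, the right-hand side becomes $j'\big(\a+\o't,\,g\exp((\beta+t\nu'')\ccdot\xi'')\big)=i'_g(\a+\o't,\beta+t\nu'')=i'_g(L_t(\a,\beta))$, which is the desired identity for every $g\in G$ and $t\in\reali$.

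The hard part is essentially nonexistent: the statement follows from \for{danotare} and the abelianness of $\tzero$, while all the substantial work has already been done before the lemma, namely the covering property of $i'_g$, the identification of its fibers in the preceding lemma, and the nonresonance of $(\o',\nu'')$ guaranteed by Lemma \ref{l:SNF}. The one point worth stressing, as opposed to a computational obstacle, is that because $i'_g$ covers $\cP'_{g.\m}$ with the nontrivial fibers described above, it intertwines the two flows but does not conjugate them; thus the linear flow on the covering torus $\toro^k\times\toro^{\dzero}$ descends through $i'_g$ to the flow of $X$ on $\cP'_{g.\m}$, which is precisely the assertion of the lemma and supplies the dynamical content of statement i. of Theorem \ref{th2}.
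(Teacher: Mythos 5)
Your proof is correct and follows exactly the route the paper intends: the paper states the lemma as an immediate consequence of equation \for{danotare}, and your computation---applying \for{danotare} with $g\exp(\beta\ccdot\xi'')$ as the group argument and using the abelianness of $\tzero$ to combine the exponentials---is precisely that deduction, spelled out in the same way as the proof of Lemma \ref{l:dynamics} in Theorem \ref{th1}. Your closing remark that $i'_g$ intertwines rather than conjugates the flows, because of its nontrivial fibers, is also consistent with the paper's careful use of ``relates'' instead of ``conjugates'' in the statement.
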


As already remarked, the frequency vector $(\o',\nu'')$ is
nonresonant.

\begin{lemma}
The sets $\cP'_{g.\m}$ are the fibers of a fibration of $\cF$ whose
base is diffeomorphic to $G/(\Tzero F_0)$, with $F_0 = \{
\exp(r^{-1}u\star\d) \,:\; u\in\interi_r^k\} \equiv \interi_r^k/K$.
\end{lemma}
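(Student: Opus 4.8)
The plan is to transport, through the covering map $j'$, the natural fibration of $\toro^k\per G$ by the translates $\toro^k\per g\,\Tzero F_0$ down to $\cF$, in direct analogy with Lemma \ref{l:P_m}(iii), where the fibration by $\toro^k\per g\Tuno$ was transported by the \emph{diffeomorphism} $j$. The only genuinely new feature is that $j'$ is an $r^k:1$ covering rather than a diffeomorphism, so the relevant fibration of $\toro^k\per G$ to push down is not the fine one by $\toro^k\per g\Tzero$ (whose leaves get identified in groups by the deck action $\Psi$), but the coarser, $\Psi$-invariant one by $\toro^k\per g\,\Tzero F_0$.

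The first step is to check that $\Tzero F_0$ is a closed abelian subgroup of $G$, so that $G/(\Tzero F_0)$ is a manifold. Since $\tzero$ and $\tres$ are subalgebras of the abelian algebra $\tuno$, the torus $\Tzero=\exp\tzero$ and the set $F_0\subset\exp\tres$ commute elementwise. Moreover the assignment $u\mapsto\exp(r^{-1}u\ccdot\d)$ is a group homomorphism $\interi_r^k\to G$ --- it is well defined on $\interi_r^k$ precisely because $\exp\d_i=e_G$, see \for{eta'} --- with image $F_0$ and kernel $K$, which gives at once the claimed identification $F_0\equiv\interi_r^k/K$ and shows that $F_0$ is a finite subgroup. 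The product of two commuting compact subgroups is again a compact, hence closed, abelian subgroup; and $\Tzero\cap F_0=\{e_G\}$ because $\tzero\cap\tres=0$ and the $\xi'_i$ are integral, so in fact $\Tzero F_0\cong\Tzero\times F_0$.

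Next I would identify the leaves with the cosets of this subgroup. From $\Psi_u(\toro^k\per g\Tzero)=\toro^k\per g\exp(r^{-1}u\ccdot\d)\,\Tzero$, taking the union over $u\in\interi_r^k$ recovers the preimage already recorded before the statement, $j'^{-1}(\cP'_{g.\m})=\toro^k\per g\,\Tzero F_0$. Because $j'$ is surjective, two leaves $\cP'_{g.\m}$ and $\cP'_{g'.\m}$ coincide iff their $j'$-preimages coincide, i.e.\ iff $g\Tzero F_0=g'\Tzero F_0$, and are disjoint otherwise; hence the leaves partition $\cF$ and are indexed bijectively by $G/(\Tzero F_0)$. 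To get the fibration itself, let $\pi_1:\toro^k\per G\to G/(\Tzero F_0)$ be the projection onto $G$ followed by $G\to G/(\Tzero F_0)$: it is a submersion whose fibers are exactly $\toro^k\per g\,\Tzero F_0=j'^{-1}(\cP'_{g.\m})$, and it is $\Psi$-invariant since each $\Psi_u$ preserves these fibers. Thus $\pi_1$ descends to $\bar\pi:\cF\to G/(\Tzero F_0)$ with $\bar\pi\circ j'=\pi_1$ and $\bar\pi^{-1}(g\Tzero F_0)=\cP'_{g.\m}$. As $j'$ is a local diffeomorphism and $\pi_1$ a submersion, $\bar\pi$ is a submersion, with compact fibers on the compact manifold $\cF$; by Ehresmann's theorem a proper submersion is a locally trivial fibration, which is the assertion (equivalently, $\bar\pi$ is a fiber bundle as the quotient of the bundle $\pi_1$ by the fiber-preserving free finite-group action $\Psi$).

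I expect the only real obstacle to be the subgroup step: one must verify that $F_0$ is closed under multiplication and commutes with $\Tzero$, both of which rest on the abelianness of $\tuno$ together with $\exp\d_i=e_G$. Once $\Tzero F_0$ is known to be a closed subgroup whose cosets are precisely the $j'$-preimages of the leaves, the descent of the fibration and the identification of the base are formal; all the arithmetic of the $r_i$, $p^i$ and $\d_i$ has already been expended in the previous lemmas.
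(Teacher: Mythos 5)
Your proposal is correct and takes essentially the same route as the paper's own (very terse) proof: both identify $j'^{-1}(\cP'_{g.\m})=\toro^k\times g\,\Tzero F_0$ and transport the coset fibration of $\toro^k\times G$ with base $G/(\Tzero F_0)$ down through the covering $j'$. The extra verifications you supply---that $u\mapsto\exp(r^{-1}u\ccdot\d)$ is a homomorphism with kernel $K$ (giving $F_0\equiv\interi_r^k/K$), that $\Tzero\cap F_0=\{e_G\}$, and the descent/Ehresmann step---are details the paper leaves implicit or records in the surrounding text rather than in the proof itself.
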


\begin{proof}
The quotient of $\cF$ by the sets $\cP_{g.\m}'$ is diffeomorphic to
the quotient of $\toro^k\times G$ by the preimages under $j'$ of such
sets. The preimage of $\cP_{g.\m}'$ is $\toro^k\times gT_0F_0$. 
\end{proof}

This proves Theorem 2 with $T=T_0F_0$. Since $T_0$ and $F_0$ are
abelian subgroups contained in $T_1$, and their intersection is $e_G$,
$T_0F_0$ is abelian and diffeomorphic to $T_0\times F_0$.

\acapo
{\it Remarks: } 1. Our treatment of resonances differs from that in
\cite{CDS}, which is restricted to the case
$k=1$. Following that approach,
in the proof of Theorem \ref{th1} we would have
taken $T_2$ as the smallest compact subgroup of $G$ that contains
the closure of $\Gamma_2$ (see also Remark 3 at the end of Section
\ref{s:proof1}).
If $T_2$ is connected, and hence a torus, when $k=1$ there is nothing
else to do. If $T_2$ is not connected, then it is the product of a
torus $T$ and of a finite group $F$, say of order $r_1$. When $k=1$,
multiplying by $r_1$ the unique lift $S_1$ yields the power
$\bar\gamma_1^{r_1}$ of the unique phase
$\bar\gamma_1$ and automatically eliminates all
resonances between the internal frequency $\o_1$ and the
external frequencies $\nu$. When $k\ge2$, however, this procedure
does not eliminate the possibility of resonances between the
internal and the external frequencies.

2. The description of motions given in statement i. of Theorem 2 takes
place not in the torus $\cP'_m \subset \cF$, but in a
torus $\mathbb T^k \times g T_0$ that covers it. The frequencies
$(\omega',\nu'')$ are relative to the motion in such longer torus. In
order to describe the motion in the tori $\cP'_m$ one should
introduce angles on them. The relation between the chosen angles in
$\mathbb T^k \times g T_0$ and those in given $\cP'_m$ can be
obtained writing the group of deck transformations $K$ defined in
\for{K} in a normal form. This requires the computation of a unimodular,
integer, $k\times k$ matrix $M$ which makes $K = s_1 \mathbb Z_r
\times \cdots \times s_l \mathbb Z_r \times \mathbb Z_r \times \cdots
\times \mathbb Z_r$. It hence follows that the frequencies of the
motion in $\cP_m$ are $(\mu,\nu'')\in \mathbb R^{k+d_0}$ with
$\mu = D M \omega'$, where $D$ is the diagonal $k\times k$
matrix whose entries are $s_1,...,s_l,1,...,1$, $l = d-d_0$, and each
$s_i$ divides the integer $r_i$ introduced in the proof of Theorem 2
(Lemma \ref{l:SNF}).
\acapon

\acapon
{\footnotesize
{\bf Acknowledgements. } This work is part of the research project
CPDR129597
{\it Symmetries and integrability of nonholonomic mechanical systems}
of the University of Padova. The authors wish to thank in a special
way one of the anonymous referees, whose very competent, pertinent,
stimulating comments have significantly improved the article,
motivating, in particular, the formulation of Theorem 3.
The authors thank Nicola
Sansonetto, who contributed to a preliminary stage of this work,
\`Angel Jorba and James Montaldi for useful conversations
on these topics, and an anonymous member of the Editorial Board for
her/his preliminary comments on the article. 
LGN would like to thank the Mathematics Department of the University
of Padova for its hospitality in the occasion of several visits,
during which part of this work  was done and acknowledges the support received from the project PAPIIT IA103815. AG acknowledges the support of ``Progetto Giovani GNFM 2015".
}

\end{document}